\numberwithin{equation}{section}
\definecolor{mygray}{gray}{0.85}
\newtheorem{theorem}{Theorem}
\newtheorem{corollary}[theorem]{Corollary}
\newtheorem{fact}[theorem]{Fact}
\newtheorem{proposition}[theorem]{Proposition}
\newtheorem{question}[theorem]{Question}
\theoremstyle{definition}
\newtheorem{definition}[theorem]{Definition}
\newtheorem{notation}[theorem]{Notation}
\newtheorem{convention}[theorem]{Convention}
\theoremstyle{remark}
\newtheorem{remark}[theorem]{Remark}
\newcommand{\NN}{\mathbb{N}}
\newcommand{\XX}{\mathbb{X}}
\newcommand{\setm}[2]{\{ #1 \mid #2 \}}
\newcommand{\embeds}{\sqsubseteq}
\newcommand{\analytic}{\boldsymbol{\Sigma}_1^1}
\newcommand{\Aut}[1]{\mathrm{Aut}(#1)}
\newcommand{\Subg}[1]{\mathrm{Subg}(#1)}
\newenvironment{enumerate-(a)}{\begin{enumerate}[label={\upshape (\alph*)}, leftmargin=2pc]}{\end{enumerate}}
\newenvironment{enumerate-(a)-r}{\begin{enumerate}[label={\upshape (\alph*)}, leftmargin=2pc,resume]}{\end{enumerate}}
\newenvironment{enumerate-(a)-5}{\begin{enumerate}[label={\upshape (\alph*)}, leftmargin=2pc,start=5]}{\end{enumerate}}
\newenvironment{enumerate-(A)}{\begin{enumerate}[label={\upshape (\Alph*)}, leftmargin=2pc]}{\end{enumerate}}
\newenvironment{enumerate-(A')}{\begin{enumerate}[label={\upshape (\Alph*')}, leftmargin=2pc]}{\end{enumerate}}
\newenvironment{enumerate-(B')}{\begin{enumerate}[label={\upshape (B')}, leftmargin=2pc]}{\end{enumerate}}
\newenvironment{enumerate-(A)-r}{\begin{enumerate}[label={\upshape (\Alph*)}, leftmargin=2pc,resume]}{\end{enumerate}}
\newenvironment{enumerate-(i)}{\begin{enumerate}[label={\upshape (\roman*)}, leftmargin=2pc]}{\end{enumerate}}
\newenvironment{enumerate-(i)-r}{\begin{enumerate}[label={\upshape (\roman*)}, leftmargin=2pc,resume]}{\end{enumerate}}
\newenvironment{enumerate-(I)}{\begin{enumerate}[label={\upshape (\Roman*)}, leftmargin=2pc]}{\end{enumerate}}
\newenvironment{enumerate-(I)-r}{\begin{enumerate}[label={\upshape (\Roman*)}, leftmargin=2pc,resume]}{\end{enumerate}}
\newenvironment{enumerate-(1)}{\begin{enumerate}[label={\upshape (\arabic*)}, leftmargin=2pc]}{\end{enumerate}}
\newenvironment{enumerate-(1)-r}{\begin{enumerate}[label={\upshape (\arabic*)}, leftmargin=2pc,resume]}{\end{enumerate}}
\newenvironment{itemizenew}{\begin{itemize}[leftmargin=2pc]}{\end{itemize}}
\def\subsection{\@startsection{subsection}{3}%
  \z@{.5\linespacing\@plus.7\linespacing}{.3\linespacing}%
  {\bfseries\centering}}
\def\subsubsection{\@startsection{subsubsection}{3}%
  \z@{.5\linespacing\@plus.7\linespacing}{.3\linespacing}%
  {\centering}}
\begin{document}

\title{Invariant Universality for Projective Planes}
\date{\today}
\author[F.~Calderoni]{Filippo Calderoni}

\address{Dipartimento di matematica ``Giuseppe Peano'', Universit\`a di Torino, Via Carlo Alberto 10, 10121 Torino --- Italy}
\email{filippo.calderoni@unito.it}

\author[G.~Paolini]{Gianluca Paolini}

\address{Einstein Institute of Mathematics,  The Hebrew University of Jerusalem, Israel}
\email{gianluca.paolini@mail.huji.ac.il}

 \subjclass[2010]{Primary: 03E15}
 \keywords{invariant descriptive set theory; bi-embeddability relation; projective planes; invariant universality}
\thanks{The first author thanks Andrew Brooke-Taylor for pointing out~\cite{PT} and interesting discussions.}

\begin{abstract} 

We continue the work of \cite{BroCalMil,CalMot,CamMarMot} by analyzing the equivalence relation of bi-embeddability on various classes of countable planes, most notably the class of countable non-Desarguesian projective planes. We use constructions of the second author to show that these equivalence relations are invariantly universal, in the sense of \cite{CamMarMot}, and thus in particular complete analytic. We also introduce a new kind of Borel reducibility relation for standard Borel \(G\)-spaces, which requires the preservation of stabilizers, and explain its connection with the notion of full
\mbox{embeddings
commonly considered in category theory.}
\end{abstract}

\maketitle

\section{Introduction}

	\begin{definition}\label{def_plane} A plane is a system of points and lines satisfying:
	\begin{enumerate-(A)}
	\item every pair of distinct points determines a unique line;
	\item every pair of distinct lines intersects in at most one point;
	\item every line contains at least two points;
	\item there exist at least three non-collinear points.
\end{enumerate-(A)}
A plane is {\em projective} if in addition:
	\begin{enumerate-(B')}
	\item every pair of lines intersects in exactly one point.
\end{enumerate-(B')}
A plane is {\em simple} if except for a finite number of points every point is incident with at most two non-trivial lines (i.e. lines containing more than two points).
\end{definition}

	The class of simple planes and the class of (non-Desarguesian) projective planes are first-order classes, and so we can regard them as standard Borel spaces, and use invariant descriptive set theory to analyze the complexity of analytic equivalence relations defined on them.
We recall that a binary relation \(R\) defined on a standard Borel space \(X\) is called \emph{analytic} (or \(\analytic\)), if it is an analytic subset of the product space \(X\times X\), i.e., it is the projection of a Borel set \(B\subseteq Y\times X\times X\), for some Polish space \(Y\).
	
The main tool to compare equivalence relations is what is known as {\em Borel reducibility}. If \(E\) and \(F\) are two equivalence relations on the standard Borel spaces \(X\) and \(Y\), we say that \(E\) \emph{Borel reduces} to \(F\) (and write \(E \leq_\mathrm{B} F\)) if there is a Borel map \(f \colon X \to Y\) witnessing that \(x\mathbin{E}y \iff f(x)\mathbin{F}f(y)\), for every \(x,y\in X\). 
We can take the statement ``\(E\) Borel reduces to \(F\)'' as a formal way of saying that 
\(E\) is not more complicated than \(F\), as any set of complete invariants for \(F\) includes a set of complete invariants for \(E\). When \(E\leq_\mathrm{B} F\) and \(F\leq_{B } E\), the complexity of \(E\) and \(F\) is considered the same, and we say that \(E\) and \(F\) are \emph{Borel bi-reducible} (in symbols, \(E\sim_{B} F\)).

In \cite{LouRos} the authors proved that the bi-embeddability relation \(\equiv_\mathrm{Gr}\) on countable graphs is a \emph{complete analytic equivalence relation}. That is, \(\equiv_\mathrm{Gr}\) is a \(\leq_\mathrm{B}\)-maximum among all analytic equivalence relations. 
It follows that \(\equiv_\mathrm{Gr}\) is strictly more complicated than any isomorphism relation between countable structures, and so it can be argued that the problem of classifying countable graphs up to bi-embeddability is highly intractable.

In \cite{FriMot} the authors proved that the bi-embeddability relation on countable graphs is analytic complete in a very strong sense: every analytic equivalence relation is Borel bi-reducible with the restriction of \(\equiv_\mathrm{Gr}\) to some \(\mathfrak{L}_{\omega_{1}\omega}\)-subclass of the standard Borel space of countable graphs. Such property reappeared thereafter in~\cite{CamMarMot}, where it was considered in a more general framework and
called \emph{invariant universality} --- the definition given in \cite{CamMarMot} is stated for all analytic equivalence relations (not only for those defined on spaces of countable structures).

Next, the work of \cite{CamMarMot} was continued by the first author of this paper et al., who proved invariant universality for the bi-embeddibility relation on several \(\mathfrak{L}_{\omega_{1}\omega}\)-classes, which include countable groups (cf. \cite[Theorem~3.5]{CalMot}), and countable fields of fixed characteristic \(p\neq 2\) (cf. \cite[Theorem~5.12]{BroCalMil}). The main technique used in \cite{BroCalMil} and \cite{CalMot} require to have a Borel reduction from the bi-embeddability relation between graphs to the bi-embeddability relation on the class under consideration, and the possibility to explicitly describe
the automorphism group of each structure in the image of the reduction.

In \cite{Pao_proj} the second author proved the Borel completeness
of both the class of simple planes and the class of non-Desarguesian projective planes. That is, the isomorphism relation on both of those classes of planes is a \(\leq_\mathrm{B}\)-maximum for all orbit equivalence relations arising from a Borel action of \(S_{\infty}\), the Polish group of permutations on \(\NN\).
In each case he defined a Borel reduction from the isomorphism relation between countable graphs to the isomorphism relation between the class under consideration. Furthermore, his constructions have the remarkable additional property of preserving automorphism groups. As we point down in the last section this feature is common to many categorical construction which give a full embedding between two \(\mathfrak{L}_{\omega_{1}\omega}\)-class, and can be adapted to define a Borel reduction between the isomorphism relations defined on the corresponding standard Borel spaces.

Our aim in this paper is twofold:
\begin{itemize}
	\item To study the bi-embeddability relation on the classes of countable planes previously considered in \cite{Pao_proj},
with the stipulation that the bi-embeddability relation between planes coincide with the bi-embeddability relation between the corresponding geometric lattices.
	\item To develop some generalities on the kind of stabilizer preserving Borel reduction (or SPB reduction for short) mentioned above.
\end{itemize} 
Concerning the first aim, we use the main constructions of \cite{Pao_proj} to prove:

\begin{theorem}\label{th_inva_un1}
The bi-embeddability relation \(\equiv_\mathrm{pl}\) between countable simple planes is invariantly universal.
\end{theorem}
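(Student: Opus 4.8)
The plan is to promote the construction of \cite{Pao_proj} from the isomorphism setting to the bi-embeddability setting and then feed it into the abstract transfer machinery of \cite{CamMarMot}. Recall from \cite{Pao_proj} that there is a Borel map \(\Phi\) sending a countable graph \(G\) to a countable simple plane \(\Phi(G)\), which is a Borel reduction from isomorphism of graphs to isomorphism of simple planes and which has the key feature \(\Aut{\Phi(G)}\cong\Aut{G}\), naturally in \(G\). Throughout, in accordance with the stipulation in aim (1), embeddings of simple planes are taken to be embeddings of the associated geometric lattices, so that \(\equiv_\mathrm{pl}\) and \(\embeds\) are interpreted in that category. The first task is to check that \(\Phi\) is not merely an isomorphism reduction but in fact a reduction for embeddability.

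To that end I would prove, for all countable graphs \(G,H\), that
\[
G \embeds H \iff \Phi(G) \embeds \Phi(H).
\]
The forward implication is the easy, functorial direction: the construction of \(\Phi\) is local, so an injective graph homomorphism \(G \hookrightarrow H\) extends to an embedding of the coding configurations, hence to an embedding \(\Phi(G)\embeds\Phi(H)\) of the corresponding geometric lattices. The reverse implication is the substantive one and requires a rigidity analysis of the coding. Concretely, one isolates inside \(\Phi(G)\) the distinguished geometric configurations (the finitely many exceptional high-incidence points, together with the non-trivial lines through them) that encode the vertices and the edges of \(G\), and shows that any embedding \(\Phi(G)\embeds\Phi(H)\) must carry these configurations to the corresponding configurations of \(\Phi(H)\); reading off the induced action on the vertex-coding configurations then yields an embedding \(G\embeds H\). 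Granting this, \(\Phi\) witnesses \({\equiv_{\mathrm{Gr}}}\leq_\mathrm{B}{\equiv_\mathrm{pl}}\) and, together with the isomorphism-reduction property, gives a Borel reduction from \(\embeds\) on graphs to \(\embeds\) on simple planes.

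With the embeddability reduction and the computation \(\Aut{\Phi(G)}\cong\Aut{G}\) in hand, the two ingredients demanded by the method of \cite{CamMarMot} (as used in \cite{CalMot,BroCalMil}) are in place, namely a Borel reduction from \(\equiv_{\mathrm{Gr}}\) to \(\equiv_\mathrm{pl}\) that also reduces embeddability, together with an explicit description of the automorphism groups in the image. Since \(\equiv_{\mathrm{Gr}}\) is invariantly universal \cite{LouRos,FriMot,CamMarMot}, the transfer criterion of \cite{CamMarMot} then yields that \(\equiv_\mathrm{pl}\) is invariantly universal. Concretely, given an analytic equivalence relation \(R\) one first finds an \(\L_{\omega_1\omega}\)-class \(D\) of graphs with \(R\sim_\mathrm{B}{\equiv_{\mathrm{Gr}}}\restriction D\); its image determines an \(\L_{\omega_1\omega}\)-class \(C\) of simple planes with \({\equiv_{\mathrm{Gr}}}\restriction D \sim_\mathrm{B} {\equiv_\mathrm{pl}}\restriction C\), the nontrivial reverse reduction being furnished by the automorphism-preservation property, and therefore \(R\sim_\mathrm{B}{\equiv_\mathrm{pl}}\restriction C\). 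As \(R\) was arbitrary, \(\equiv_\mathrm{pl}\) is invariantly universal.

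The main obstacle is the reverse direction of the embeddability equivalence above: ruling out ``exotic'' embeddings of \(\Phi(G)\) into \(\Phi(H)\) that do not respect the coding. Unlike the isomorphism case treated in \cite{Pao_proj}, where one only needs to reflect bijective maps, here a priori an embedding could collapse or rearrange the coding configurations, and establishing the requisite rigidity is precisely where the definition of simple plane (all but finitely many points being incident with at most two non-trivial lines) is used to constrain the possible images. A secondary point to verify is that the automorphism-preserving feature of \(\Phi\) survives unchanged in the embeddability framework and that the passage to geometric lattices does not introduce additional embeddings, so that the hypotheses of the \cite{CamMarMot} criterion are genuinely met.
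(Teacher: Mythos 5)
Your proposal takes essentially the same route as the paper: the paper's proof uses exactly the map \(\Gamma\mapsto P_{\Gamma}\) of \cite{Pao_proj}, establishes precisely your three properties in Theorem~\ref{Theorem:Paolini:1} --- isomorphism reduction, embeddability reduction (with the reverse direction as the substantive new point, handled via the \((\star_1)\)-argument of \cite{Pao_proj} in the geometric-lattice signature), and \(\Aut{\Gamma}\cong\Aut{P_{\Gamma}}\) --- and then feeds them into the criterion of \cite[Theorem~4.2]{CamMarMot} (Fact~\ref{theorem : suff condition} together with Remark~\ref{remark: simplifiedsuffcondition}). The one imprecision is in your ``concrete'' transfer step: pushing forward an arbitrary \(\L_{\omega_1\omega}\)-class \(D\) of graphs does not work on the strength of automorphism-preservation alone (neither the Borelness of the saturated image \(C\) nor the reverse Borel reduction would follow for classes of non-rigid graphs); what the paper does instead is restrict \(\Gamma\mapsto P_{\Gamma}\) to the class \(\XX\) of \emph{rigid} graphs from Fact~\ref{proposition : FriMot}, so that every plane in the image is itself rigid, the stabilizer map into \(\Subg{S_{\infty}}\) is trivially Borel, and it is this rigidity of the source graphs --- combined with, not replaced by, Aut-preservation --- that drives the reverse reduction inside the criterion.
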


\begin{theorem}\label{th_inva_un2}
The bi-embeddability relation \(\equiv_\mathrm{ppl}\) between countable non-{Desar-guesian} projective planes is invariantly universal.
\end{theorem}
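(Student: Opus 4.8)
The plan is to promote the construction of the second author from \cite{Pao_proj} from a reduction of isomorphism to one of bi-embeddability, and then feed it into the general machinery for invariant universality, exactly as in the simple-plane case of Theorem~\ref{th_inva_un1}. By \cite{FriMot,CamMarMot} the relation \(\equiv_\mathrm{Gr}\) is invariantly universal, so it suffices to exhibit a Borel map \(\Phi\) from countable graphs to countable non-Desarguesian projective planes such that (i) \(\Phi\) reduces the embeddability quasi-order, \(G\embeds_{\mathrm{Gr}}H\iff\Phi(G)\embeds_{\mathrm{ppl}}\Phi(H)\), and (ii) \(\Phi\) preserves automorphism groups, \(\Aut{G}\cong\Aut{\Phi(G)}\). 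Since the stabilizer of a countable structure under the logic action is precisely its automorphism group, (ii) is the stabilizer-preservation hypothesis isolated in the final section, and together with (i) it makes \(\Phi\) the kind of SPB reduction for which invariant universality transfers.

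For \(\Phi\) I would take the assignment \(G\mapsto P_G\) of \cite{Pao_proj}, which encodes \(G\) into a partial plane \(\pi_G\) and passes to a non-Desarguesian projective plane \(P_G\) extending it (e.g.\ a free projective completion \(P_G=F(\pi_G)\)). That \(\Phi\) is Borel, reduces isomorphism, and satisfies \(\Aut{G}\cong\Aut{P_G}\) is already proved in \cite{Pao_proj}, giving clause (ii); the remaining task is clause (i), with embeddability of planes read off their geometric lattices as stipulated. The forward implication is routine functoriality: a graph embedding \(G\hookrightarrow H\) induces an embedding \(\pi_G\hookrightarrow\pi_H\) of partial planes, which the completion extends to a lattice embedding \(P_G\hookrightarrow P_H\); hence \(G\embeds_{\mathrm{Gr}}H\Rightarrow P_G\embeds_{\mathrm{ppl}}P_H\).

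The reverse implication is the substance of the argument and I expect it to be the main obstacle: one must show that every lattice embedding \(P_G\hookrightarrow P_H\) already restricts to an embedding of the encoded partial planes, and hence of the underlying graphs. The tool is the rigidity of the completion. Because the points and lines adjoined in forming \(P_G\) are generic --- each incident only with the minimum number of earlier elements forced by the plane axioms --- the originally encoded configuration should be recognizable inside \(P_G\) by a purely lattice-theoretic condition, such as belonging to a suitably confined subconfiguration, and such a condition is preserved under arbitrary lattice embeddings. Consequently any embedding of the completions must carry the core of \(P_G\) into the core of \(P_H\), yielding the desired graph embedding. Pinning down the recognition criterion, and verifying that it is invariant under all lattice embeddings and not merely under the structured ones produced by the completion functor, is the delicate point.

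Granting (i) and (ii), the theorem follows from the transfer principle of the final section. For an arbitrary analytic equivalence relation \(E\), fix an \(\mathfrak{L}_{\omega_{1}\omega}\)-subclass \(A\) of graphs with \(E\sim_{B}\equiv_\mathrm{Gr}\restriction A\); then, since \(\Phi\) preserves stabilizers, the saturation of \(\Phi(A)\) is an \(\mathfrak{L}_{\omega_{1}\omega}\)-subclass of non-Desarguesian projective planes on which \(\equiv_\mathrm{ppl}\) is Borel bi-reducible with \(\equiv_\mathrm{Gr}\restriction A\), and hence with \(E\). This is exactly invariant universality of \(\equiv_\mathrm{ppl}\).
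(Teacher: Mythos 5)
Your overall strategy matches the paper's: take the map \(\Gamma \mapsto P^*_{\Gamma}\) of \cite{Pao_proj}, show that besides reducing isomorphism and preserving automorphism groups it also reduces embeddability, and then feed this into the invariant-universality machinery. Your sketch of the embeddability reduction is also the paper's argument: the forward direction by functoriality of the free projective extension (Fact~\ref{fact_proj1} together with Remark~\ref{remark_complete_embed}), and the reverse direction by the rigidity of cores of confined configurations under arbitrary lattice embeddings (Fact~\ref{fact_proj2}, combined with \((\star_1)\) of \cite{Pao_proj} and the simple-plane case); this is exactly Theorem~\ref{Theorem:Paolini:2}\ref{item:3}, so leaving it as a sketch is in line with how the paper itself treats it.

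The genuine gap is in your final transfer paragraph. Invariant universality of \(\equiv_\mathrm{ppl}\) does \emph{not} follow formally from invariant universality of \(\equiv_\mathrm{Gr}\) plus your clauses (i) and (ii) by saturating images of arbitrary \(\mathfrak{L}_{\omega_1\omega}\)-classes \(A\) of graphs. Two things break. First, the isomorphism-saturation \([\Phi(A)]_{\cong}\) of a Borel set is a priori only analytic, so it need not be Borel, hence need not be an \(\mathfrak{L}_{\omega_1\omega}\)-class at all. Second, and more seriously, the direction \({\equiv_\mathrm{ppl}}\restriction [\Phi(A)]_{\cong} \leq_\mathrm{B} {\equiv_\mathrm{Gr}}\restriction A\) requires a Borel map selecting, for each plane \(P\) in the saturation, some \(G \in A\) with \(P \cong \Phi(G)\); this is a Borel uniformization problem that stabilizer preservation alone does not solve. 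Indeed, for non-rigid \(G \in A\), knowing the abstract isomorphism \(\Aut{G}\cong\Aut{\Phi(G)}\) does not even make the map \(P\mapsto \Aut{P}\in\Subg{S_\infty}\) Borel, which is the condition~\ref{conditionII : CMMR13} actually needed. Constructing this inverse reduction is precisely the nontrivial content of \cite[Theorem~4.2]{CamMarMot} (Fact~\ref{theorem : suff condition} of the paper), and its proof works over the special class \(\XX\) of Fact~\ref{proposition : FriMot}, where graphs are rigid and isomorphism coincides with equality: rigidity is what makes condition~\ref{conditionII : CMMR13} automatic (Remark~\ref{remark: simplifiedsuffcondition}) and what makes the selection arguments go through. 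The repair is simple and is the paper's proof: restrict \(\Gamma\mapsto P^*_{\Gamma}\) to \(\XX\), observe that items \ref{item:1} and \ref{item:3} of Theorem~\ref{Theorem:Paolini:2} give condition~\ref{conditionI : CMMR13} and that item \ref{item:2} plus rigidity gives condition~\ref{conditionII : CMMR13}, and then cite Fact~\ref{theorem : suff condition} (equivalently, Fact~\ref{Fact:reformulated}) rather than re-deriving the transfer by hand.
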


\begin{corollary}\label{first_cor}
Every \(\analytic\) equivalence relation is Borel bi-reducible with the bi\hyp{}embeddability relation restricted to some \(\mathfrak{L}_{\omega_{1}\omega}\)-subclass of countable simple planes.
\end{corollary}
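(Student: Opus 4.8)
The plan is to observe that Corollary \ref{first_cor} is nothing more than the definition of invariant universality, spelled out for the specific relation $\equiv_\mathrm{pl}$. Recall from \cite{CamMarMot} that an analytic equivalence relation $E$ on a standard Borel space of countable structures is \emph{invariantly universal} precisely when, for every $\analytic$ equivalence relation $F$, one can find an $\mathfrak{L}_{\omega_{1}\omega}$-axiomatizable invariant subclass $C$ of the ambient space such that $F \sim_{B} (E \restriction C)$. Thus, once Theorem \ref{th_inva_un1} has been established, the corollary follows by taking $E = \equiv_\mathrm{pl}$ and reading off the witnessing subclass $C$ directly from the definition; I expect essentially no additional mathematical content beyond the theorem itself.

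In more detail, the argument I would give is the following. Fix an arbitrary $\analytic$ equivalence relation $F$. By Theorem \ref{th_inva_un1}, $\equiv_\mathrm{pl}$ is invariantly universal, so the definition yields an $\mathfrak{L}_{\omega_{1}\omega}$-subclass $C$ of the space of countable simple planes, together with Borel maps witnessing $F \leq_\mathrm{B} (\equiv_\mathrm{pl}\restriction C)$ and $(\equiv_\mathrm{pl}\restriction C) \leq_\mathrm{B} F$. These two reductions are exactly the assertion that $F \sim_{B} (\equiv_\mathrm{pl}\restriction C)$, which is the content of the corollary. No fresh construction of planes, and no new reduction, is required.

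The only point deserving a moment of care is the bookkeeping about the ambient space. The class of simple planes is a Borel, indeed first-order, isomorphism-invariant subset of the relevant $\Mod$-space, so by the Lopez--Escobar theorem an $\mathfrak{L}_{\omega_{1}\omega}$-subclass $C$ of it is again $\mathfrak{L}_{\omega_{1}\omega}$-axiomatizable in the full language and the restriction $\equiv_\mathrm{pl}\restriction C$ is unambiguous. Consequently there is no genuine obstacle at the level of the corollary: all the substantive work sits inside Theorem \ref{th_inva_un1}, and the statement is a purely definitional consequence of it.
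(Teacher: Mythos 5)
Your route is in substance the same as the paper's --- Theorem~\ref{th_inva_un1} combined with a L\'opez--Escobar argument --- but as written it contains one step that does not follow. You assert that, once invariant universality of \(\equiv_\mathrm{pl}\) is known, ``the definition yields an \(\mathfrak{L}_{\omega_{1}\omega}\)-subclass \(C\)''. It does not: Definition~\ref{Definition : invariantly universalER} (i.e.\ the definition from \cite{CamMarMot}, which is stated for \emph{arbitrary} standard Borel spaces, where axiomatizability is not even meaningful) produces only a \emph{Borel} \(\cong_\mathrm{pl}\)-invariant subset \(A\subseteq X_\mathrm{pl}\) with \(F\sim_{B}{\equiv_\mathrm{pl}}\restriction A\). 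The passage from ``Borel and isomorphism-invariant'' to ``\(\mathfrak{L}_{\omega_{1}\omega}\)-axiomatizable'' is precisely where the L\'opez--Escobar theorem must be applied; this is the content of the paper's Fact~\ref{fact: elmentary subclasses}, which is exactly what the paper cites, together with Theorem~\ref{th_inva_un1}, in its one-line proof of the corollary. So your first paragraph ``recalls'' as a definition what is in fact a nontrivial (if immediate) characterization valid only over spaces of countable structures.

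You do invoke L\'opez--Escobar, but only for a peripheral bookkeeping issue (relativizing a subclass of \(X_\mathrm{pl}\) to the full space of structures), not at the point where it carries the weight. The repair is short: given an analytic equivalence relation \(F\), Theorem~\ref{th_inva_un1} provides a Borel \(\cong_\mathrm{pl}\)-invariant set \(A\subseteq X_\mathrm{pl}\) with \(F\sim_{B}{\equiv_\mathrm{pl}}\restriction A\); since the class of countable simple planes is first-order, hence isomorphism-invariant in the space of all countable structures of the relevant signature, \(A\) is isomorphism-invariant there as well, and L\'opez--Escobar yields a sentence \(\sigma\) of \(\mathfrak{L}_{\omega_{1}\omega}\) whose countable models are exactly the structures in \(A\); conjoining \(\sigma\) with the axioms of simple planes exhibits \(A\) as an \(\mathfrak{L}_{\omega_{1}\omega}\)-subclass of countable simple planes. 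With this step made explicit, your argument coincides with the paper's.
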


\begin{corollary}\label{second_cor}
Every \(\analytic\) equivalence relation is Borel bi-reducible with the bi\hyp{}embeddability relation restricted to some \(\mathfrak{L}_{\omega_{1}\omega}\)-subclass of countable non-Desarguesian projective planes.
\end{corollary}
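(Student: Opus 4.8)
The plan is to read off Corollary~\ref{second_cor} directly from Theorem~\ref{th_inva_un2} by unwinding the definition of invariant universality. Recall that, for the bi-embeddability relation on a class of countable structures, being invariantly universal is precisely the assertion that every $\analytic$ equivalence relation is Borel bi-reducible with the restriction of that bi-embeddability relation to some invariant Borel subclass of the ambient standard Borel space. Thus essentially all of the mathematical content is already packaged in Theorem~\ref{th_inva_un2}, and the sole remaining task is to identify the invariant Borel subclasses with the $\mathfrak{L}_{\omega_{1}\omega}$-subclasses.

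First I would fix the standard Borel space $X_{\mathrm{ppl}}$ of countable non-Desarguesian projective planes, coded as structures in the relevant first-order signature, together with the canonical logic action of $S_{\infty}$; its orbit equivalence relation is isomorphism, and the embeddability quasi-order (hence bi-embeddability $\equiv_\mathrm{ppl}$) is invariant under this action. Next, given an arbitrary $\analytic$ equivalence relation $R$, Theorem~\ref{th_inva_un2} furnishes an isomorphism-invariant Borel set $B \subseteq X_{\mathrm{ppl}}$ with $R \sim_B (\equiv_\mathrm{ppl} \restriction B)$.

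The final step is to invoke the L\'opez--Escobar theorem, according to which a Borel subset of $X_{\mathrm{ppl}}$ is invariant under the logic action if and only if it is the class of models of some $\mathfrak{L}_{\omega_{1}\omega}$-sentence. Applying this to $B$ produces the desired $\mathfrak{L}_{\omega_{1}\omega}$-subclass, and the bi-reducibility $R \sim_B (\equiv_\mathrm{ppl} \restriction B)$ is exactly the conclusion of the corollary. If the definition of invariant universality one adopts is already phrased in terms of $\mathfrak{L}_{\omega_{1}\omega}$-subclasses rather than invariant Borel sets, then this step is vacuous and the corollary is a literal restatement of the theorem.

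There is no genuine obstacle at the level of the corollary itself: the passage from the theorem is merely the routine translation between invariant Borel sets and $\mathfrak{L}_{\omega_{1}\omega}$-definable classes. The only point warranting a line of care is checking that the coding of non-Desarguesian projective planes used to prove Theorem~\ref{th_inva_un2} does yield a genuinely Borel, $S_{\infty}$-invariant ambient space, so that L\'opez--Escobar applies verbatim and the witnessing subclass $B$ can indeed be taken to be $\mathfrak{L}_{\omega_{1}\omega}$-definable.
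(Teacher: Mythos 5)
Your proposal is correct and follows essentially the same route as the paper: the paper's proof invokes Theorem~\ref{th_inva_un2} together with Fact~\ref{fact: elmentary subclasses}, and that fact is precisely the L\'opez--Escobar translation between isomorphism-invariant Borel subsets and \(\mathfrak{L}_{\omega_{1}\omega}\)-subclasses that you spell out. Your closing point of care is also handled in the paper, where it is noted that the class of countable non-Desarguesian projective planes is first-order axiomatizable, so \(X_\mathrm{ppl}\) is indeed an invariant Borel space of countable structures to which L\'opez--Escobar applies.
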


Consequently, the bi-embeddability relation in the class of~countable
non\hyp{}Desarguesian projective planes is strictly more complicated than isomorphism.
In fact, we get that \(\equiv_\mathrm{ppl}\) is a complete analytic equivalence relation in the sense of~\cite[Definition~1.2]{LouRos}. It follows that we cannot classify the class of countable non\hyp{}Desarguesian projective planes up to bi-embeddability in any reasonable way: neither in terms of Ulm-type invariants, nor in terms of orbits of Polish group actions.

Concerning the second aim, we point out how in some cases SPB reductions can be obtained from the existing literature in category theory and list a couple of open questions.

\section{Invariant Universality}

Following \cite{LouRos} we consider Borel reducibility between quasi-orders, i.e., reflexive and transitive binary relations.

\begin{definition}
Let \(Q\) and \(R\) be quasi-orders on the standard Borel spaces \(X\) and \(Y\).
\begin{itemizenew}

\item \(Q\) \emph{Borel reduces} to \(R\) (in symbols, \(Q\leq_\mathrm{B} R\)) if there exists a Borel map
\(f\colon X\to Y\) such that for all \(x,y\in X\), \[x\mathbin{Q}y\quad\iff\quad{f(x)}\mathbin{R}{f(y)}\, .\]In this case we say that \(f\) is a Borel reduction from \(Q\) to \(R\) (in symbols, \({f\colon Q\leq_\mathrm{B}R}\)).
\item \(Q\) is \emph{Borel bi-reducible} with \(R\) (in symbols, \(Q\sim_{B} R\)) if \(Q\leq_\mathrm{B}R\) and \(R\leq_\mathrm{B}Q\).
\end{itemizenew}
\end{definition}

 In particular, when \(Q\) and \(R\) are equivalence relations, one obtains the usual notion of Borel reducibility previously mentioned in the introduction.
When \(Q\) is an analytic quasi-order on \(X\) and \(A\) is a Borel subset of \(X\), we can regard \(A\) as a standard Borel space with its relative standard Borel structure and the quasi-order on \(A\) obtained by the restriction of \(Q\). We shall denote by \(Q\restriction A\) the restriction of \(Q\) over \(A\).

We now recall the main definitions from \cite[Definition~1.1]{CamMarMot}.

 \begin{definition}\label{Definition : invariantly universal}
Let \( Q \) be a \( \analytic \) quasi-order on some standard Borel space \( X \) and let \(E\) be a \( \analytic \) equivalence subrelation of \( Q \). We say that \( (Q,E) \)
is \emph{invariantly universal} if for every \( \analytic \) quasi-order \( P \) there is a Borel subset \( A\subseteq X \) which is \(E\)-invariant and
such that  \( P \sim_{B} Q \restriction A \).
 \end{definition}
 \begin{definition}\label{Definition : invariantly universalER}
 Let \( F \) be a \( \analytic \) equivalence relation on some standard Borel space \( X \) and let \(E\) be a \( \analytic \) equivalence subrelation of \( F \). We say that \( (F,E) \)
is \emph{invariantly universal} if for every \( \analytic \) equivalence relation \( D \) there is a Borel subset \( A\subseteq X \) which is \(E\)-invariant and
such that  \( D  \sim_{B} F \restriction A \).
 \end{definition}
 
Notice that if \((F,E)\) is invariantly universal, then \(F\) is in particular a complete analytic equivalence relation in the sense of \cite[Definition~1.2]{LouRos}. Moreover, our interest for quasi-orders is easily explained: if \((Q, E)\) is an invariantly universal quasi-order and \(E_{Q}\) is the equivalence relation generated by \(Q\), then \((E_{Q} , E)\) is
an invariantly universal equivalence relation.

Throughout this paper we will make use of the following notation.

	\begin{notation}\label{notation: relations} Let \(X\) be a standard Borel space of countable structures.
	\begin{enumerate-(i)}
	\item We denote by \(\sqsubseteq_X\) (or simply $\sqsubseteq$) the embeddability relation on \( X \). 
	\item We denote by \(\cong_X \) (or simply $\cong$) the isomorphism relation on \( X \).
	\item We denote by \(\equiv_X \) (or simply $\equiv$) the bi-embeddability relation on \( X \).
	\item We say that the quasi-order \(Q\) on \(X\) is invariantly universal if \({(Q,\cong_X)}\) is (cf.~Definition~\ref{Definition : invariantly universal}).
	\item We say that the equivalence relation \(E\) on \(X\) is invariantly universal if \({(E,\cong_X)}\) is (cf.~Definition~\ref{Definition : invariantly universalER}).
	\end{enumerate-(i)}
\end{notation}

The following fact is an immediate consequence of L\'opez-Escobar theorem (cf.~\cite[Theorem~16.8]{Kec}) and gives a further insight of the phenomenon of invariantly universality on spaces of countable structures.

\begin{fact}\label{fact: elmentary subclasses}
If \(X\) is a standard Borel space of countable structures, and \(F\) is a \(\analytic\) equivalence relation on \(X\), then \(F\) is invariantly universal if and only if every \(\analytic\) equivalence relation is Borel bi-reducible with the restriction of \(F\) to some \(\mathfrak{L}_{\omega_{1}\omega}\) subclass of \(X\).
\end{fact}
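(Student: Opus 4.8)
The plan is to recognize that the two conditions in the statement are simply two descriptions of the \emph{same} witnessing sets, translated through the López-Escobar theorem. Here \(X\) is (coded as) a \(\cong_X\)-invariant Borel subset of the space \(\Mod_L\) of \(L\)-structures with universe \(\NN\) for some fixed countable language \(L\), on which \(S_\infty\) acts by the logic action so that \(\cong_X\) is the induced orbit equivalence relation. The relevant consequence of López-Escobar (cf.~\cite[Theorem~16.8]{Kec}) is that a set \(A \subseteq X\) is Borel and \(\cong_X\)-invariant if and only if \(A = \setm{M \in X}{M \models \varphi}\) for some \(\mathfrak{L}_{\omega_{1}\omega}\) sentence \(\varphi\); that is, the Borel isomorphism-invariant subsets of \(X\) are exactly the \(\mathfrak{L}_{\omega_{1}\omega}\) subclasses of \(X\).

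With this correspondence in hand, I would unwind Definition~\ref{Definition : invariantly universalER} applied to \((F,\cong_X)\). For the forward implication, assume \(F\) is invariantly universal and fix an \(\analytic\) equivalence relation \(D\); the definition provides a \(\cong_X\)-invariant Borel set \(A\subseteq X\) with \(D \sim_B F\restriction A\), and López-Escobar rewrites \(A\) as an \(\mathfrak{L}_{\omega_{1}\omega}\) subclass, giving exactly the right-hand condition. For the converse, assume each \(\analytic\) equivalence relation \(D\) satisfies \(D \sim_B F\restriction A\) for some \(\mathfrak{L}_{\omega_{1}\omega}\) subclass \(A\subseteq X\); López-Escobar guarantees that any such \(A\) is automatically Borel and \(\cong_X\)-invariant, so it serves as the witness required by Definition~\ref{Definition : invariantly universalER}, and \(F\) is invariantly universal.

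Because each direction is a verbatim reformulation under the López-Escobar equivalence, I do not expect a genuine obstacle. The only point needing care is bookkeeping about the ambient space: one must confirm that ``\(\mathfrak{L}_{\omega_{1}\omega}\) subclass of \(X\)'' means a subset of \(X\) defined by an \(\mathfrak{L}_{\omega_{1}\omega}\) sentence (relativized, when \(X\) is itself a proper invariant Borel subclass of \(\Mod_L\), to the defining sentence of \(X\)), so that López-Escobar applies directly to subsets of \(X\) rather than only to subsets of the full space \(\Mod_L\).
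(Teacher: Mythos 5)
Your proposal is correct and follows exactly the paper's route: the paper gives no detailed argument, simply declaring the fact ``an immediate consequence of L\'opez-Escobar theorem,'' and your write-up spells out precisely that correspondence --- Borel \(\cong_X\)-invariant subsets of \(X\) are exactly the \(\mathfrak{L}_{\omega_1\omega}\)-definable subclasses --- and then unwinds Definition~\ref{Definition : invariantly universalER} in both directions. Your closing remark about relativizing L\'opez-Escobar to the invariant Borel set \(X\) inside \(\Mod_L\) is the right bookkeeping point and is the only subtlety the paper leaves implicit.
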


We now present a sufficient condition for invariant universality. Let \(X_\mathrm{Gr}\) be the standard Borel space of countable graphs. First we abstract the following fact from~\cite[Section~3]{CamMarMot}. 
	\begin{fact}\label{proposition : FriMot}
There is a Borel subset \(\XX \subseteq X_\mathrm{Gr}\) such that the following hold:
\begin{enumerate-(i)}
\item \label{property1} the equality and isomorphism relations restricted to \( \\X \), denoted respectively by \( =_{\XX} \) and \( \cong_{\XX} \), coincide;
\item \label{property2} each graph in \( \XX \) is rigid; that is,  it has no non-trivial automorphism;
\item for every \(\analytic\) quasi-order \(P\) on \( 2^{\NN}\), there exists an injective Borel reduction \( \alpha\mapsto T_{\alpha} \) from \(P\) to \(\embeds_{\XX}\).
\end{enumerate-(i)}
\end{fact}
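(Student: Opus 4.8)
The plan is to build \( \XX \) together with the reductions \( \alpha\mapsto T_\alpha \) in three successive stages: a completeness reduction into embeddability on graphs, a rigidification, and a canonical relabelling forcing isomorphism to collapse to equality; injectivity is then grafted on by a marker coding that is invisible to \( \embeds \). First I would invoke the completeness of embeddability on countable graphs among \( \analytic \) quasi-orders, due to Louveau--Rosendal \cite{LouRos}: for the given \( \analytic \) quasi-order \( P \) on \( 2^{\NN} \) there is a Borel map \( \alpha\mapsto G_\alpha \) with \( \alpha\mathbin{P}\beta \iff G_\alpha\embeds G_\beta \). At this stage the \( G_\alpha \) need be neither rigid nor pairwise non-isomorphic, and the map need not be injective, so the remaining stages serve precisely to repair these defects without disturbing the embeddability relation.

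Next I would fix, once and for all, a Borel rigidifying operation \( G\mapsto\widehat G \) on countable graphs, of the sort used in Borel-completeness arguments and in \cite[Section~3]{CamMarMot}: each vertex \( v \) of \( G \) is replaced by an asymmetric gadget encoding a canonical address for \( v \) and carrying orientation markers, and each edge by a directed connector gadget. The gadgets are chosen so that (a) \( \widehat G \) is rigid; (b) every vertex of \( \widehat G \) is the unique realization of an \( \mathfrak{L}_{\omega_1\omega} \)-formula recoverable from the isomorphism type; and (c) the assignment is fully faithful for both embeddings and isomorphisms, that is \( G\embeds H\iff\widehat G\embeds\widehat H \) and \( G\cong H\iff\widehat G\cong\widehat H \). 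Using (b) I would post-compose with a Borel canonicalization placing the universe on \( \NN \) by listing the addresses in a fixed order, and let \( \XX \) be the resulting Borel set of canonically labelled rigid graphs. Then (ii) is clause (a), and (i) holds because, by (b) and the labelling, any isomorphism between members of \( \XX \) fixes every address and is therefore the identity on the common universe, so \( \cong_\XX \) coincides with \( =_\XX \).

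For (iii) I would set \( T_\alpha:=\widehat{G_\alpha} \) (relabelled), so that (c) and the first stage give \( \alpha\mathbin{P}\beta\iff T_\alpha\embeds T_\beta \). Injectivity must be arranged compatibly with (i), so I cannot read \( \alpha \) off an isomorphism invariant; instead I would code \( \alpha \) by attaching, at a sequence of marked slots \( n\in\NN \), a gadget of ``type \( \alpha(n) \)'' drawn from a fixed pair \( A_0,A_1 \) of rigid countable graphs that are mutually embeddable but not isomorphic, kept in a colour disjoint from the \( P \)-coding part. Distinct \( \alpha \) then yield non-isomorphic (hence, by (i), unequal) structures \( T_\alpha \), while the marker parts are \( \embeds \)-invisible — any two of them embed into one another — so \( \embeds_\XX \) on the image still tracks \( P \). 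Borelness of \( \alpha\mapsto T_\alpha \) is immediate since every stage is Borel, and on spaces of countable structures the definability of the addresses makes both the canonicalization and membership in \( \XX \) Borel via the L\'opez--Escobar theorem.

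The hard part will be the middle stage: designing the vertex- and edge-gadgets so that \( G\mapsto\widehat G \) is simultaneously fully faithful for \( \embeds \) (introducing no spurious embeddings and destroying none), automorphism-killing, and vertex-identifying in \( \mathfrak{L}_{\omega_1\omega} \); and then checking that the injectivity markers of the third stage interact correctly with these gadgets, remaining \( \embeds \)-invisible yet isomorphism-rigid. This simultaneous balancing is the combinatorial heart of the argument, whereas the transfer of the Louveau--Rosendal reduction and all the Borelness bookkeeping are routine.
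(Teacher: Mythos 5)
There is a genuine gap, and it is precisely in the stage you defer as ``the combinatorial heart'': as specified, your middle stage is not merely hard but provably impossible. You ask for a Borel operation \(G\mapsto\widehat G\), defined on \emph{all} countable graphs, with every \(\widehat G\) rigid, every vertex of \(\widehat G\) uniquely definable by data ``recoverable from the isomorphism type'', and with \(G\cong H\iff \widehat G\cong\widehat H\); you then post-compose with a Borel canonicalization under which isomorphic members of \(\XX\) become literally equal. The composite \(G\mapsto \mathrm{can}(\widehat G)\) would then satisfy \(G\cong H\iff \mathrm{can}(\widehat G)=\mathrm{can}(\widehat H)\), i.e.\ it would be a Borel reduction of \(\cong_\mathrm{Gr}\) to equality on a standard Borel space. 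That says graph isomorphism is smooth, which is false: \(\cong_\mathrm{Gr}\) is Borel complete, and already \(E_{0}\leq_\mathrm{B}\cong_\mathrm{Gr}\). The same contradiction appears concretely in your gadgets: if the address attached at \(v\) is isomorphism-invariant (as canonicalization and the direction \(G\cong H\Rightarrow\widehat G\cong\widehat H\) require), then two vertices in the same \(\Aut{G}\)-orbit get the same address and every nontrivial automorphism of \(G\) lifts to one of \(\widehat G\), destroying rigidity; if instead the addresses encode the integer labels of the vertices, rigidity holds but \(G\cong H\Rightarrow\widehat G\cong\widehat H\) fails. So conditions (a), (b), (c) of your second stage cannot coexist.

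The way the statement is actually provable --- and this is how the source the paper relies on proceeds (the paper itself gives no proof: it abstracts the Fact from \cite[Section~3]{CamMarMot}, which refines \cite{LouRos} and \cite{FriMot}) --- is to never rigidify arbitrary graphs at all. The input of the reduction is a point \(\alpha\in 2^{\NN}\), a space on which equality is trivially smooth, and one codes \(\alpha\) itself into the combinatorial tree \(T_{\alpha}\) produced by the Louveau--Rosendal machinery: rigidity and pairwise non-isomorphism are arranged by decorations read off from \(\alpha\), so that \(T_{\alpha}\cong T_{\beta}\) implies \(\alpha=\beta\), while the decorations are invisible to \(\embeds\). Your fourth stage (the mutually embeddable, non-isomorphic markers \(A_{0},A_{1}\)) is exactly this idea --- note, incidentally, that it contradicts your own claim that you ``cannot read \(\alpha\) off an isomorphism invariant'': you can and you must, since that is what makes the image pairwise non-isomorphic. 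Once you adopt it, the isomorphism-faithful rigidification is unnecessary: what remains is to verify that the specific trees from stage 1, after decoration, are rigid, and that the set \(\XX\) of all decorated trees of this form is Borel with \(\cong_{\XX}\) equal to \(=_{\XX}\). That verification leans on the particular shape of the Louveau--Rosendal trees, not on any general rigidification of graphs, and it is the content of \cite[Section~3]{CamMarMot}.
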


\begin{notation}
We denote by \(S_{\infty}\) the Polish group of permutations on \(\NN\), and
by \(\Subg{S_{\infty}}\) the
standard Borel space of closed subgroup of \(S_{\infty}\), endowed with the Effros-Borel structure (see~\cite[Section~12.C]{Kec}).
\end{notation}

Now we recall the following fact, which is a particular case of \cite[Theorem~4.2]{CamMarMot}.

\begin{fact}\label{theorem : suff condition} Let \(X\) be a standard Borel space of countable structures. Then the relation \(\embeds_{X}\) is an invariantly universal quasi-order
provided that the following conditions hold:
\begin{enumerate-(I)}\label{enumerate : TheoremCMMR13}
 \item \label{conditionI : CMMR13} there is a Borel map \(f \colon \mathbb{X}\rightarrow X\) such that:
 \begin{enumerate-(i)}
 \item \label{i : CMMR13}\(f\colon {\embeds_\mathbb{X}}\leq_\mathrm{B}{\embeds_{X}}\);
 \item \label{ii : CMMR13}\(f\colon {\cong_\mathbb{X}}\leq_\mathrm{B}{\cong_{X}}\);
  \end{enumerate-(i)}
\item \label{conditionII : CMMR13} the map
\(
f(\mathbb{X})\to\Subg { S_{\infty}}: f(T)\mapsto \Aut{f(T)}\)
is Borel.
\end{enumerate-(I)}
\end{fact}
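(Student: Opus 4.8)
The plan is to verify both clauses of invariant universality for $(\embeds_X,\cong_X)$ by transporting the rigid parametrization of Fact~\ref{proposition : FriMot} across $f$ and then saturating under isomorphism. Fix an arbitrary $\analytic$ quasi-order $P$; since any standard Borel space is Borel isomorphic to a subspace of $2^{\NN}$ and $\sim_{B}$ is preserved under such transport, I may assume $P$ is a quasi-order on $2^{\NN}$, so that Fact~\ref{proposition : FriMot}(iii) supplies an injective Borel reduction $\alpha\mapsto T_\alpha$ from $P$ to $\embeds_{\XX}$. The first observation is that $f\restriction\XX$ is injective: by clauses (i)--(ii) of Fact~\ref{proposition : FriMot} the relation $=_{\XX}$ coincides with $\cong_{\XX}$, so condition (I)(ii) reads $T=T'\iff f(T)\cong_X f(T')$, whence $f(T)=f(T')$ forces $T=T'$. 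Therefore $\beta\colon\alpha\mapsto f(T_\alpha)$ is an injective Borel map, and by the Lusin--Souslin theorem its image $B\coloneqq\beta(2^{\NN})$ is Borel, as is $T_\alpha\mapsto\alpha$ on the relevant images.

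The forward reduction is then immediate: for all $\alpha,\alpha'$ we have $\alpha\mathbin{P}\alpha'\iff T_\alpha\embeds_{\XX}T_{\alpha'}\iff f(T_\alpha)\embeds_X f(T_{\alpha'})$, the second step using condition (I)(i). I would take $A$ to be the $\cong_X$-saturation $A\coloneqq\setm{y\in X}{\exists z\in B\ (y\cong_X z)}$, which is $\cong_X$-invariant by construction and contains $B$. Note that $B$ is a partial transversal for $\cong_X$: indeed $f(T_\alpha)\cong_X f(T_{\alpha'})$ iff $T_\alpha\cong_{\XX}T_{\alpha'}$ (by (I)(ii)) iff $T_\alpha=T_{\alpha'}$ (as $\cong_{\XX}$ is $=_{\XX}$) iff $\alpha=\alpha'$. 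Since $\beta$ maps into $B\subseteq A$, it already witnesses $P\leq_\mathrm{B}\embeds_X\restriction A$.

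It remains to produce the reverse reduction $\embeds_X\restriction A\leq_\mathrm{B}P$, and this is where condition (II) does the essential work. Each $y\in A$ is isomorphic to $f(T_\alpha)$ for a unique $\alpha$ (uniqueness because $B$ is a transversal), so $y\mapsto\alpha$ is well defined set-theoretically; the issue is Borelness. I would regard $X$ as a Borel $S_{\infty}$-space whose orbit equivalence relation is $\cong_X$ and whose point stabilizers are the groups $\Aut{\cdot}$, and invoke a selection lemma in the style of Becker--Kechris: if $B$ is a Borel partial transversal on which the stabilizer map $z\mapsto\Aut{z}\in\Subg{S_{\infty}}$ is Borel, then the saturation $A=[B]_{\cong_X}$ is Borel and the canonical retraction $r\colon A\to B$ (sending $y$ to the unique $z\in B$ with $y\cong_X z$) is Borel. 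Condition (II) supplies precisely the Borelness of the stabilizer map on $B$. Granting this, $A$ is Borel and $y\mapsto\beta^{-1}(r(y))$ is a Borel map on $A$ (with $\beta^{-1}$ Borel on $B$ by Lusin--Souslin); since $\embeds_X$ is $\cong_X$-invariant, for $y\cong_X f(T_\alpha)$ and $y'\cong_X f(T_{\alpha'})$ we get $y\embeds_X y'\iff f(T_\alpha)\embeds_X f(T_{\alpha'})\iff\alpha\mathbin{P}\alpha'$, so this map reduces $\embeds_X\restriction A$ to $P$. Together with the previous paragraph this yields $P\sim_{B}\embeds_X\restriction A$ with $A$ Borel and $\cong_X$-invariant, as required.

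The main obstacle is exactly the selection lemma invoked above --- the step that converts the Borel stabilizer hypothesis (II) into \emph{both} the Borelness of the saturation $A$ and the Borelness of the retraction $r$, since the $S_{\infty}$-saturation of a Borel set is in general only analytic. Everything else is a formal composition of reductions: conditions (I)(i)--(ii) serve only to carry $\alpha\mapsto T_\alpha$ across $f$ while preserving embeddability and the transversal property, whereas the genuine descriptive-set-theoretic content --- controlling the saturation and uniformizing $\cong_X$ over it --- is concentrated in (II). I would therefore devote the bulk of the write-up to isolating and proving this selection statement, and treat the two reductions as bookkeeping around it.
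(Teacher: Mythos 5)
First, a point of comparison: the paper does not prove this Fact at all --- it is quoted as a particular case of \cite[Theorem~4.2]{CamMarMot}, so your attempt must be measured against that proof. Your outline reconstructs its architecture essentially correctly: pass to a quasi-order \(P\) on \(2^{\NN}\), push the rigid parametrization \(\alpha\mapsto T_\alpha\) of Fact~\ref{proposition : FriMot} across \(f\), note that \(B=\setm{f(T_\alpha)}{\alpha\in 2^{\NN}}\) is a Borel partial transversal for \(\cong_X\) (injectivity and the transversal property both coming from (I)(ii) together with \(=_{\XX}\,=\,\cong_{\XX}\), Borelness of \(B\) from Lusin--Souslin), take \(A\) to be its \(\cong_X\)-saturation, and observe that the forward reduction is formal while all the content lies in showing that \(A\) is Borel and carries a Borel retraction onto \(B\). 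Your localization of hypothesis (II) as exactly the ingredient needed there is the right insight.

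The genuine gap is that the selection lemma you invoke is never proved, and it is not an off-the-shelf Becker--Kechris citation: as you yourself say, it is the entire descriptive-set-theoretic content of the statement, so leaving it as ``a selection lemma in the style of Becker--Kechris'' leaves the theorem unproved. It is, however, true, and is closed as follows (this is in essence what \cite{CamMarMot} do). Let \(s\colon\Subg{S_{\infty}}\times S_{\infty}\to S_{\infty}\) be a Borel map selecting a representative from each left coset, i.e.\ \(s(H,g)\in gH\) and \(s(H,g)=s(H,g')\) whenever \(gH=g'H\); such a uniformly Borel coset selector is provided by \cite[Theorem~12.17]{Kec}, whose proof is uniform in \(H\) with respect to the Effros Borel structure. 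By hypothesis (II) the set
\[
C \;=\; \setm{(z,g)\in B\times S_{\infty}}{s(\Aut{z},g)=g}
\]
is Borel. The action map \((z,g)\mapsto g\cdot z\) restricted to \(C\) is injective: if \(g_1\cdot z_1=g_2\cdot z_2\) then \(z_1\cong_X z_2\), hence \(z_1=z_2=z\) since \(B\) is a partial transversal, hence \(g_1\Aut{z}=g_2\Aut{z}\) and so \(g_1=s(\Aut{z},g_1)=s(\Aut{z},g_2)=g_2\). Its image is exactly \(A\): if \(y\cong_X z\in B\), the set of witnesses \(\setm{g}{g\cdot z=y}\) is a left coset of \(\Aut{z}\), and its selected representative \(g^{*}\) satisfies \((z,g^{*})\in C\) and \(g^{*}\cdot z=y\). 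By Lusin--Souslin, \(A\) is therefore Borel and the inverse map \(A\to C\) is Borel; its first coordinate is your retraction \(r\), and \(\beta^{-1}\circ r\) is then the Borel reduction of \(\embeds_X\restriction A\) to \(P\), exactly as in your last display. With this lemma supplied, your argument becomes a complete and correct proof of the Fact; without it, the proposal is only a (correctly organized) skeleton.
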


We stress:

\begin{remark}\label{remark: simplifiedsuffcondition}
Since every graph in \(\mathbb{X}\) is rigid, whenever the reduction \(f\) witnessing~\ref{conditionI : CMMR13} of Fact~\ref{theorem : suff condition} further preserves the automorphisms groups, condition~\ref{conditionII : CMMR13} is automatically satisfied.
\end{remark}

\section{Planes}

	\begin{definition}\label{def_subplane} Let $P_1$ and $P_2$ be planes (cf. Definition~\ref{def_plane}).
	\begin{enumerate-(1)}
	\item We say that $P_1$ is a {\em subplane} of $P_2$ if $P_1 \subseteq P_2$, points of $P_1$ are points of $P_2$, lines of $P_1$ are lines of $P_2$, and the point $p$ is on the line $\ell$ in $P_1$ if and only if the point $p$ is on the line $\ell$ in $P_2$.
	\item We say that $P_1$ is a {\em complete subplane} of $P_2$ is $P_1$ is a subplane of $P_2$ and any point of intersection of lines of $P_1$ which lies in $P_2$ also lies in $P_1$, and every line joining two points of $P_1$ which lies in $P_2$ also lies in $P_1$.
\end{enumerate-(1)}	
\end{definition}

\begin{definition}[Cf. {\cite[Theorem 11.4]{piper}}]\label{def_free_ext} Given a plane $P$ we define by induction on $n < \omega$ a chain of planes $(P_n : n < \omega)$ as follows:
\newline $n = 0$. Let $P_n = P$.
\newline $n = m+1$. For every pair of parallel lines $\ell \neq \ell'$ in $P_m$ add a new point $\ell \wedge \ell'$ to $P_m$ incident with only $\ell$ and $\ell'$. Let $P_n$ be the resulting plane.
\newline We define the {\em free projective extension} of $P$ to be $F(P) \coloneqq\bigcup_{n < \omega} P_n$.
\end{definition}

	\begin{definition}\label{def_conf} Let $P$ be a plane.
	\begin{enumerate-(1)}
	\item We say that a line from $P$ is {\em trivial} if it contains exactly two points from $P$.
	\item If $P$ is {\em finite}, then we say that $P$ is {\em confined} if every point of $P$ is incident with at least three lines of $P$, and every line of $P$ is non-trivial.
	\item We say that $P$ is confined if every point and every line of $P$ is contained in a finite confined subplane of $P$.
\end{enumerate-(1)}
\end{definition}

	\begin{fact}\label{fact_proj1} Let $P_1$ and $P_2$ be confined planes (cf. Definition~\ref{def_conf}) and $f\colon P_1 \rightarrow P_2$ a complete embedding (i.e. $f(P_1)$ is a complete subplane of $P_2$). Then there exists a complete embedding $\hat{f} \colon F(P_1) \rightarrow F(P_2)$ such that $f \restriction P_1 = f$.
\end{fact}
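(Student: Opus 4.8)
The plan is to construct $\hat{f}$ as the union of an increasing chain of complete embeddings $f_n \colon (P_1)_n \to (P_2)_n$ between the successive stages of the two free extensions, built by induction on $n < \omega$. For the base case I set $f_0 = f$, which is a complete embedding by hypothesis. Assuming $f_n$ is a complete embedding with $f_n \restriction (P_1)_m = f_m$ for all $m \le n$, I will extend it to the new points created at stage $n+1$. Recall that passing from $(P_1)_n$ to $(P_1)_{n+1}$ only adds points (the lines are never changed), one point $\ell \wedge \ell'$ for each unordered pair of distinct parallel lines $\ell, \ell'$ of $(P_1)_n$, and this new point is incident with $\ell$ and $\ell'$ and with no other line. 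Taking $\hat{f} = \bigcup_{n} f_n$ will then yield the desired map, with $\hat{f} \restriction P_1 = f_0 = f$.

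The key point of the inductive step, and the place where the completeness hypothesis is used, is the following observation: if $\ell \neq \ell'$ are parallel in $(P_1)_n$, then $f_n(\ell)$ and $f_n(\ell')$ are distinct and parallel in $(P_2)_n$. Distinctness is immediate from injectivity of $f_n$ on lines. If $f_n(\ell)$ and $f_n(\ell')$ met at a point $p$ of $(P_2)_n$, then, since $f_n\big((P_1)_n\big)$ is a complete subplane and $f_n(\ell), f_n(\ell')$ are lines of it, the intersection point $p$ would already lie in $f_n\big((P_1)_n\big)$, say $p = f_n(q)$; as $f_n$ reflects incidence this forces $q$ to lie on both $\ell$ and $\ell'$, contradicting parallelism. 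Consequently the point $f_n(\ell) \wedge f_n(\ell')$ exists in $(P_2)_{n+1}$, and I can define $f_{n+1}(\ell \wedge \ell') := f_n(\ell) \wedge f_n(\ell')$ while keeping $f_{n+1} = f_n$ on $(P_1)_n$. Injectivity of $f_{n+1}$ follows because distinct parallel pairs of $(P_1)_n$ have distinct $f_n$-images (again by injectivity on lines) and hence determine distinct new points of $(P_2)_{n+1}$, which are moreover distinct from all old points; that $f_{n+1}$ preserves and reflects incidence is clear, since the new point and its image are each incident with exactly the two relevant lines.

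The main work is to check that $f_{n+1}$ is again a \emph{complete} embedding, and this is the step I expect to be the crux. For intersections, suppose two image lines $f_n(\alpha), f_n(\beta)$ meet at a point $p$ of $(P_2)_{n+1}$. If $\alpha, \beta$ already met in $(P_1)_n$, then $p$ is the image of that intersection; if $\alpha, \beta$ were parallel in $(P_1)_n$, then, using that each new point is incident with only its two defining lines, the unique point at which $f_n(\alpha)$ and $f_n(\beta)$ meet in $(P_2)_{n+1}$ is precisely $f_n(\alpha)\wedge f_n(\beta) = f_{n+1}(\alpha \wedge \beta)$, which lies in the image. For joins, a line of $(P_2)_{n+1}$ through two image points is a line of $(P_2)_n$; if both points are old it lies in the image by completeness of $f_n$, and if one of them is a new point $f_n(\ell)\wedge f_n(\ell')$ the line must be $f_n(\ell)$ or $f_n(\ell')$, again in the image. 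This establishes completeness at stage $n+1$. Finally, completeness of the union $\hat{f}$ follows by reducing to a finite stage: any witness to a failure of completeness in $F(P_2)$ involves finitely many points and lines and hence lives in some $(P_2)_N$, where completeness of $f_N$ already rules it out. The hypothesis that $P_1, P_2$ are confined is what guarantees that the stages $(P_i)_n$ and the extensions $F(P_i)$ are genuine (non-Desarguesian projective) planes, so that the construction stays within the intended class; the extension argument itself rests entirely on the completeness of $f$.
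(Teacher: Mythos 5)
Your proof is correct, but it takes a genuinely different route from the paper's: the paper disposes of this fact in one line, citing Hall's Theorem~4.3 \cite{hall_proj}, whereas you unwind the stage-by-stage argument that lies behind that citation. Your induction is sound at every step: the key observation that a complete embedding sends parallel lines to parallel lines (so that the new point $f_n(\ell)\wedge f_n(\ell')$ really exists at the next stage) is exactly where completeness enters; the verification that completeness propagates to $f_{n+1}$ --- using that no lines are ever added, that each new point is incident with only its two defining lines, and that two distinct lines meet in at most one point at every stage (a property easily seen to be preserved by the extension process) --- is correct; and the passage to the union $\hat{f}=\bigcup_n f_n$ is a routine finiteness argument. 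What your approach buys is self-containedness, plus the explicit information that only completeness of $f$ is used: confinedness plays no role in your construction, so you have in fact proved the statement under a weaker hypothesis (confinedness is what drives Fact~\ref{fact_proj2}, via Hall's computation of the core, not this extension fact). What the paper's citation buys is brevity and a pointer to the classical source. One correction: your closing claim that confinedness of $P_1,P_2$ is what ``guarantees that the stages $(P_i)_n$ and the extensions $F(P_i)$ are genuine (non-Desarguesian projective) planes'' is not right --- confinedness is irrelevant to that, and under the literal reading of Definition~\ref{def_free_ext} the intermediate stages are only partial planes in any case (two points created at the same stage need not be joined by any line, since lines are never added), while the stages are certainly never projective, as they contain parallel lines. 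This does not damage your argument, which uses only the incidence structure of the stages, but the remark should be deleted or corrected.
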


	\begin{proof} This is essentially \cite[Theorem 4.3]{hall_proj}.
\end{proof}

	\begin{fact}\label{fact_proj2}  Let $P_1$ and $P_2$ be confined planes (cf. Definition~\ref{def_conf}) and $f\colon F(P_1) \rightarrow F(P_2)$ an embedding. Then $f \restriction P_1 \subseteq P_2$.
\end{fact}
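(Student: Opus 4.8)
The plan is to prove Fact~\ref{fact_proj2}: given confined planes $P_1, P_2$ and an embedding $f \colon F(P_1) \to F(P_2)$, the restriction $f \restriction P_1$ maps into $P_2$. The key is to give an intrinsic, embedding-invariant characterization of the points and lines of the base plane $P_i$ inside its free extension $F(P_i)$, so that any embedding must send base elements to base elements. The natural candidate is confinedness: since each $P_i$ is confined, every point and line of $P_i$ lies in a finite confined subplane of $P_i$, and hence of $F(P_i)$. The strategy is to show, conversely, that the confined elements of $F(P_i)$ are exactly the elements of $P_i$, and that embeddings preserve the property of lying in a finite confined subplane.

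First I would recall the structure of the free projective extension from Definition~\ref{def_free_ext}: at each stage $n = m+1$ we adjoin, for each pair of parallel lines in $P_m$, a new intersection point incident with exactly those two lines (and no others at the moment of its creation). The crucial combinatorial observation is that every newly added point $\ell \wedge \ell'$ is, at the stage of its birth, incident with only two lines; more generally one shows by induction on $n$ that any point or line of $F(P) \setminus P$ is \emph{not} contained in any finite confined subplane of $F(P)$. This is the standard ``no new confined configurations'' phenomenon for free extensions: a genuinely free point can always be ``pruned'' because it sits at the top of the inductive construction and removing it (or the lines through it) cannot destroy a finite confined configuration that it supposedly belonged to. Concretely, I would argue that if a finite subplane $Q \subseteq F(P)$ contains an element of maximal construction-level $> 0$, then that element witnesses a failure of confinedness (a point on too few lines, or a trivial line), contradicting $Q$ being confined. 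Hence every finite confined subplane of $F(P)$ is already a subplane of $P$, which gives the characterization: \emph{an element of $F(P)$ is confined (lies in a finite confined subplane) if and only if it belongs to $P$.}

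With this characterization in hand, the conclusion is almost formal. Take any point or line $x \in P_1$. Since $P_1$ is confined, $x$ lies in some finite confined subplane $Q \subseteq P_1 \subseteq F(P_1)$. An embedding of planes preserves incidence and non-incidence, hence it sends $Q$ isomorphically onto a finite subplane $f(Q) \subseteq F(P_2)$ that is again confined (confinedness of a finite plane is defined purely by incidence counts in Definition~\ref{def_conf}, which are preserved by the embedding $f$). Therefore $f(x) \in f(Q)$ is a confined element of $F(P_2)$, and by the characterization above $f(x) \in P_2$. Since $x \in P_1$ was arbitrary, $f \restriction P_1 \subseteq P_2$, as required.

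The main obstacle I anticipate is the inductive ``pruning'' lemma asserting that no element of $F(P) \setminus P$ can belong to a finite confined subplane. One must be careful that an added point created at stage $m+1$ as the intersection of two parallel lines of $P_m$ may, at later stages, become incident with further points via newly adjoined lines; so ``incident with only two lines'' holds only momentarily. The correct invariant to track is the level function on $F(P)$ and the fact that the maximal-level element of any finite subconfiguration always has a deficient incidence pattern (it lies on too few of the lines, or is too sparsely populated, relative to what confinedness demands), so it cannot be repaired within a finite subplane. Making this deficiency argument precise—handling both the ``point on too few lines'' case and the ``line with too few points'' case symmetrically, and correctly accounting for which incidences are present at which stage—is the technical heart of the proof; everything else is bookkeeping.
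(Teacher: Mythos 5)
Your proposal is correct and takes essentially the same route as the paper: your key lemma (no element of \(F(P)\setminus P\) lies in a finite confined subplane, so the confined elements of \(F(P)\) are exactly those of \(P\)) is precisely the statement the paper cites from Hughes--Piper's theory of cores (``the core of \(F(P_\ell)\) equals the core of \(P_\ell\)'', which is all of \(P_\ell\) by confinedness), and your concluding step --- embeddings carry finite confined configurations to finite confined configurations, hence map \(P_1\) into \(P_2\) --- is the paper's ``it easily follows'' step. The only difference is that the paper outsources the key lemma to the literature, whereas you re-derive it by the standard maximal-level pruning argument, correctly flagging the one genuine subtlety (incidences acquired at later stages) and the correct fix (count only incidences with elements of level at most that of the maximal-level element).
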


	\begin{proof} In the terminology of \cite[Chapter XI]{piper}, the core of $F(P_{\ell})$ equals the core of $P_\ell$ (cf. \cite[Corollary at p. 224]{piper}), which in turn is the whole of $P_\ell$, since by assumption $P_\ell$ is confined. It easily follows that $f \restriction P_1 \subseteq P_2$, since otherwise the core of $F(P_2)$ is not equal to $P_2$.
\end{proof}

	As well known, the class of planes corresponds canonically to the class of geometric lattices of length $3$ (cf. \cite[Section 2]{geometric_lattices}). For our purposes the perspective of geometric lattices is preferable (see Convention~\ref{remark_choice_signature} and the proofs of Theorems~\ref{Theorem:Paolini:1} and~\ref{Theorem:Paolini:2}), and thus in the next two remarks we make explicit this correspondence.

	\begin{remark}\label{corr1} Let $P$ be a plane as in Definition~\ref{def_plane}. First of all, add to $P$ a largest element $1$ and a smallest element $0$, and let $P^+ = P \cup \{ 0, 1 \}$. Then, for every pair of points $a$ and $b$ from $P$ let $a \vee b$ denotes the unique line they determine. For every pair of lines $\ell_1$ and $\ell_2$ from $P$ let $\ell_1 \wedge \ell_2$ be $0$ if the two lines are parallel, and let it be the unique point in their intersection otherwise. Then $(P^+, 0, 1, \vee, \wedge)$ is a geometric lattice of length $3$ (cf. \cite[Section 2]{geometric_lattices}).
\end{remark}

	\begin{remark}\label{corr2} Let $(P, 0, 1, \vee, \wedge)$ be a geometric lattice of length $3$ (cf.~\cite[Section~2]{geometric_lattices}), and $P^- = P - \{0, 1 \}$. Let $A$ be the set of atoms of $P$ and let $B$ be the set of co-atoms of $P$, and for $a \in A$ and $b \in B$, let $a \mathbin{E} b$ if $a \vee b = b$. Then $(P^-, A, B, E)$ is a plane, where $A$ denotes the set of points of the plane, $B$ denotes the set of lines of the plane and $E$ is the incidence relation between points and lines.
\end{remark}

	\begin{remark}\label{remark_complete_embed} Let $P_1$ and $P_2$ be planes, and $P_1^+$ and $P_2^+$ be the associated geometric lattices. Then $P_1$ is a complete subplane of $P_2$ iff $P_1^+$ is a sublattice of $P_2^+$.
\end{remark}

	\begin{convention}\label{remark_choice_signature} For the rest of the paper, formally, by a plane we will mean a geometric lattice of length $3$ considered with respect to the signature $L = \{ 0, 1,  \vee, \wedge \}$. Consequently, an embedding of planes will mean an embedding of geometric lattices.
\end{convention}

\section{Proofs of Main Theorems}

	First of all we stress:
	
	\begin{quote} {\bf We invite the reader to keep in mind Convention~\ref{remark_choice_signature}.}
\end{quote}
	
	\begin{notation}\label{notation_classes}
	\begin{enumerate-(1)}
	\item We denote by \(X_\mathrm{Gr}\) the standard Borel space of countable graphs.
	\item We denote by \(X_\mathrm{pl}\) the standard Borel space of simple planes\footnote{Clearly the class of countable simple planes is a first-order class.} (cf. Definition~\ref{def_plane}), and by $\embeds_\mathrm{pl}$ and $\cong_\mathrm{pl}$ the relation of embeddability and isomorphism on \(X_\mathrm{pl}\), respectively.
	\item We denote by \(X_\mathrm{ppl}\) the standard Borel space of countable non-Desarguesian projective planes\footnote{The class of countable non-Desarguesian projective planes is first-order (see e.g. \cite[Definition 5.1.1]{steven}).}, and by $\embeds_\mathrm{ppl}$ and $\cong_\mathrm{ppl}$ the relation of embeddability and isomorphism on \(X_\mathrm{ppl}\), respectively.
	\end{enumerate-(1)}
\end{notation}

\begin{theorem}\label{Theorem:Paolini:1} For every \(\Gamma \in X_\mathrm{Gr}\) let \(P_{\Gamma}\) be defined as in \cite[Section 3]{Pao_proj}.
The map \(X_\mathrm{Gr}\to X_\mathrm{pl} \colon \Gamma \mapsto P_{\Gamma}$ (cf. Notation~\ref{notation_classes}) is Borel and:
\begin{enumerate-(1)}
\item \label{item:1} \(\Gamma_{1}\cong \Gamma_{2}\) if and only if \(P_{\Gamma_{1}}\cong P_{\Gamma_{2}}\);
\item \label{item:2} \(\Aut{\Gamma}\cong \Aut{P_{\Gamma}}\);
\item \label{item:3} \(\Gamma_{1}\sqsubseteq \Gamma_{2}\) if and only if \(P_{\Gamma_{1}}\sqsubseteq P_{\Gamma_{2}}\).
\end{enumerate-(1)}
\end{theorem}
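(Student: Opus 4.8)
The plan is to regard Borelness of the map, together with items~\ref{item:1} and~\ref{item:2}, as already supplied by the second author in~\cite[Section~3]{Pao_proj}: that construction is designed precisely so that \(\Gamma\mapsto P_{\Gamma}\) is a Borel reduction from isomorphism of graphs to \(\cong_\mathrm{pl}\) which moreover induces an isomorphism \(\Aut{\Gamma}\cong\Aut{P_{\Gamma}}\) of automorphism groups. Thus the only genuinely new assertion is item~\ref{item:3}, the preservation of embeddability, and this is where I would concentrate the argument. By Convention~\ref{remark_choice_signature} an embedding of planes is an embedding of the associated geometric lattices, so throughout I would work with the incidence structure of \(P_{\Gamma}\) and its finite ``gadget'' configurations.

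For the forward implication of~\ref{item:3} I would exhibit the functoriality of the construction on embeddings. The plane \(P_{\Gamma}\) is assembled from \(\Gamma\) by gluing, for each vertex and each edge of \(\Gamma\), a fixed finite gadget of points and lines, in a pattern dictated by the incidences of \(\Gamma\). Given a graph embedding \(g\colon\Gamma_{1}\hookrightarrow\Gamma_{2}\), I would define \(\hat g\colon P_{\Gamma_{1}}\to P_{\Gamma_{2}}\) by sending the gadget attached to a vertex \(v\) (resp.\ an edge \(e\)) of \(\Gamma_{1}\) isomorphically onto the gadget attached to \(g(v)\) (resp.\ \(g(e)\)) of \(\Gamma_{2}\). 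Since \(g\) preserves both incidence and non-incidence of vertices and edges, the map respects the way gadgets are glued, and \(\hat g\) is a well-defined embedding.

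For the reverse implication --- which I expect to be the main obstacle --- I would start from a plane embedding \(f\colon P_{\Gamma_{1}}\hookrightarrow P_{\Gamma_{2}}\) and read off a graph embedding. The crucial point is that the vertex- and edge-gadgets are recognizable inside any \(P_{\Gamma}\) by incidence-theoretic invariants that survive passage to a subplane: the number of non-trivial lines through a point, the number of points on a line, and the local configuration around the finitely many ``special'' points. Because a simple plane has, off a finite set, at most two non-trivial lines through each point, the gadget-carrying points are distinguished by lying on strictly more non-trivial lines. I would verify that \(f\) carries each vertex-gadget (resp.\ edge-gadget) of \(P_{\Gamma_{1}}\) onto a vertex-gadget (resp.\ edge-gadget) of \(P_{\Gamma_{2}}\), and distinct gadgets to distinct gadgets; the preservation of gadget-incidences then forces the induced map \(g\) on vertices and edges to preserve adjacency and non-adjacency, giving \(g\colon\Gamma_{1}\hookrightarrow\Gamma_{2}\).

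The delicate step, which I would treat most carefully, is ruling out degenerate behaviour: an embedding need not be surjective, so a priori the image of a gadget could sit inside a larger configuration of \(P_{\Gamma_{2}}\), or extra incidences present in the target could spoil the recovery of \(g\). Here I would lean on the rigidity already encoded by item~\ref{item:2} --- the gadgets admit no non-trivial self-maps --- together with the invariance of the incidence counts, to show that \(f(P_{\Gamma_{1}})\), being a subplane isomorphic to \(P_{\Gamma_{1}}\), can only meet the gadgets of \(P_{\Gamma_{2}}\) in \emph{complete} gadgets, never partially. Establishing this ``no partial gadgets'' rigidity is the heart of the matter; once it is in place, the correspondence between subplanes of \(P_{\Gamma_{2}}\) of the relevant type and induced subgraphs of \(\Gamma_{2}\) follows, and with it item~\ref{item:3}.
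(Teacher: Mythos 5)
Your treatment of Borelness and of items (1)--(2) --- deferring to \cite{Pao_proj} --- matches the paper, and your forward direction for item (3) (functoriality of the construction on graph embeddings) is consistent with what the paper does, namely adapting the isomorphism-invariance argument at the end of the main proof of \cite{Pao_proj}. The gap is in your reverse direction, at exactly the step you call the heart of the matter. You propose to rule out ``partial gadgets'' and spurious incidences by combining (a) the rigidity of the gadgets (item (2): no non-trivial self-maps) with (b) the claim that incidence counts ``survive passage to a subplane''. Neither tool does the job. Rigidity concerns self-maps of a configuration and says nothing about how the image of \(P_{\Gamma_{1}}\) sits inside the ambient plane \(P_{\Gamma_{2}}\): it cannot prevent a trivial (two-point) line of \(P_{\Gamma_{1}}\) from being mapped onto a line of \(P_{\Gamma_{2}}\) carrying a third point outside the image --- a ``phantom edge'' --- nor a point from acquiring extra non-trivial lines in the target. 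And (b) is not an invariance you may assume: for maps that merely preserve incidence and non-incidence these counts can a priori increase, and that is precisely the degeneration you set out to exclude; asserting its impossibility is the content of the step, not a justification of it.

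What actually closes the gap --- and what the paper's proof explicitly flags as the crucial point --- is the choice of signature in Convention~\ref{remark_choice_signature}: an embedding of planes is an embedding of geometric lattices, so it preserves \(\wedge\) and the constants \(0,1\). Hence parallel lines are sent to parallel lines and intersection points of image lines lie in the image; equivalently, the image of \(P_{\Gamma_{1}}\) is a \emph{complete} subplane of \(P_{\Gamma_{2}}\) in the sense of Definition~\ref{def_subplane} (cf.\ Remark~\ref{remark_complete_embed}). It is meet-preservation that allows the recognition argument of \cite{Pao_proj} to run: by the property \((\star_1)\) there, the extra point witnessing any would-be phantom incidence is realized as a meet of lines of the image, hence would itself have to lie in the image, which is a contradiction. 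You invoke the Convention at the outset but never use it anywhere in your argument; as written, your reasoning applies verbatim to mere incidence embeddings, for which the key step has no justification at all. So your proposal has the right architecture (recover the graph embedding from the plane embedding, as in the isomorphism-invariance proof), but it is missing the one idea the paper's proof actually turns on.
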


	\begin{proof} Items~\ref{item:1} and~\ref{item:2} are proved in \cite{Pao_proj}. Concerning~\ref{item:3}, argue as in the end of the proof of the main theorem (where it is proved that $\Gamma \mapsto P_{\Gamma}$ is isomorphism-invariant). Notice that the choice of signature (and thus of embedding) is crucial for the argument to go through, since we need that intersection of lines are preserved by the embedding in order to use the $(\star_1)$ of the proof in the way we use it there.
\end{proof}

\begin{theorem}\label{Theorem:Paolini:2} For every \(\Gamma \in X_\mathrm{Gr}\) let \(P^*_{\Gamma}\) be defined as in \cite[Section 4]{Pao_proj}.
The map \(X_\mathrm{Gr}\to X_\mathrm{ppl} \colon \Gamma \mapsto P^*_{\Gamma}$ (cf. Notation~\ref{notation_classes}) is Borel and:
\begin{enumerate-(1)}
\item \label{item:1} \(\Gamma_{1}\cong \Gamma_{2}\) if and only if \(P_{\Gamma_{1}}\cong P_{\Gamma_{2}}\);
\item \label{item:2} \(\Aut{\Gamma}\cong \Aut{P_{\Gamma}}\);
\item \label{item:3} \(\Gamma_{1}\sqsubseteq \Gamma_{2}\) if and only if \(P_{\Gamma_{1}}\sqsubseteq P_{\Gamma_{2}}\).
\end{enumerate-(1)}
\end{theorem}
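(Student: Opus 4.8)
The plan is to imitate the proof of Theorem~\ref{Theorem:Paolini:1}, reducing item~\ref{item:3} in the projective setting to the free-extension machinery of Facts~\ref{fact_proj1} and~\ref{fact_proj2}. Items~\ref{item:1} and~\ref{item:2} are exactly the content of \cite[Section~4]{Pao_proj}, and Borelness of \(\Gamma \mapsto P^*_{\Gamma}\) is immediate from the finitary and uniform nature of the construction; so I would focus on~\ref{item:3}. The first step is to recall from \cite[Section~4]{Pao_proj} that \(P^*_{\Gamma}\) is the free projective extension \(F(C_{\Gamma})\) of a plane \(C_{\Gamma}\) that codes \(\Gamma\), and to verify that each \(C_{\Gamma}\) is confined (cf.\ Definition~\ref{def_conf}): this is a local check that every point and line of \(C_{\Gamma}\) sits inside a finite confined subplane, and it is what licenses the use of Facts~\ref{fact_proj1} and~\ref{fact_proj2} for the pair \(C_{\Gamma_1}, C_{\Gamma_2}\).

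For the forward direction, assume \(\Gamma_1 \sqsubseteq \Gamma_2\). Running the coding argument of \cite[Section~4]{Pao_proj} with a graph embedding in place of a graph isomorphism, I would obtain an embedding \(C_{\Gamma_1} \to C_{\Gamma_2}\). As stressed after Theorem~\ref{Theorem:Paolini:1}, Convention~\ref{remark_choice_signature} makes every embedding of planes a lattice embedding, hence (by Remark~\ref{remark_complete_embed}) a \emph{complete} embedding in the sense of Definition~\ref{def_subplane}; this is exactly the hypothesis required by Fact~\ref{fact_proj1}, which then extends the map to a complete embedding \(F(C_{\Gamma_1}) \to F(C_{\Gamma_2})\), i.e.\ \(P^*_{\Gamma_1} \sqsubseteq P^*_{\Gamma_2}\).

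For the converse, assume \(P^*_{\Gamma_1} \sqsubseteq P^*_{\Gamma_2}\), so that there is an embedding \(f \colon F(C_{\Gamma_1}) \to F(C_{\Gamma_2})\). Since the \(C_{\Gamma_i}\) are confined, Fact~\ref{fact_proj2} yields \(f \restriction C_{\Gamma_1} \subseteq C_{\Gamma_2}\), so \(f\) restricts to a (necessarily complete) embedding of the coding planes. I would then rerun the \((\star_1)\)-analysis from the end of the main proof of \cite[Section~4]{Pao_proj} to extract from this restricted embedding a graph embedding \(\Gamma_1 \hookrightarrow \Gamma_2\); here it is again essential that intersections of lines are preserved, so that the configurations encoding vertices and edges of \(\Gamma_1\) are carried to the corresponding configurations in \(C_{\Gamma_2}\).

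The main obstacle I expect lies in the converse, on two points. First, the \((\star_1)\) argument of \cite{Pao_proj} was written to recover an isomorphism, so I would need to re-examine it and confirm that it uses only that \(f\) is a complete embedding --- not surjectivity --- so that it still produces a graph map when fed a mere embedding. Second, one must be sure the restriction given by Fact~\ref{fact_proj2} does not collapse or merge the coding gadgets of \(C_{\Gamma_1}\), which is where confinedness (ensuring the core of \(F(C_{\Gamma})\) is all of \(C_{\Gamma}\)) does the essential work. By comparison, the forward direction and Borelness are routine once the induced embedding is known to be complete.
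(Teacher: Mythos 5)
Your proposal is correct and follows essentially the same route as the paper: view \(P^*_{\Gamma}\) as the free projective extension of the confined coding plane, use Remark~\ref{remark_complete_embed} and Fact~\ref{fact_proj1} for the forward direction of~\ref{item:3}, and Fact~\ref{fact_proj2} together with the \((\star_1)\)-analysis of \cite{Pao_proj} for the converse. The only cosmetic difference is that the paper, recognizing the base of the free extension as the Section-3 plane, simply cites Theorem~\ref{Theorem:Paolini:1}(3) to pass between graph embeddings and embeddings of the coding planes, whereas you propose to re-run that coding argument directly --- the content is the same.
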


	\begin{proof} Items~\ref{item:1} and~\ref{item:2} are proved in \cite{Pao_proj}. Concerning~\ref{item:3}, it follows from Remark~\ref{remark_complete_embed}, Facts~\ref{fact_proj1} and~\ref{fact_proj2}, \cite[$(\star_1)$ of Proof of Theorem 3]{Pao_proj} and Theorem~\ref{Theorem:Paolini:1}(3). Notice that also in this case the choice of signature (and thus of embedding) is crucial.
\end{proof}

\begin{proof}[Proof of Theorem~\ref{th_inva_un1}]
Consider the restriction of the map \(\Gamma \mapsto P_{\Gamma}\) on \(\XX\) from Fact~\ref{proposition : FriMot}.
By items~\ref{item:1} and~\ref{item:3} of Theorem~\ref{Theorem:Paolini:1} the map \(\Gamma\mapsto P_{\Gamma}\) simultaneously reduces \(\embeds_{\XX}\) to \(\embeds_\mathrm{pl}\) and \(\cong_{\XX}\) to \(\cong_\mathrm{pl}\).
Condition~\ref{conditionII : CMMR13} of Fact~\ref{theorem : suff condition}
follows by Theorem~\ref{Theorem:Paolini:1}\ref{item:2} and Remark~\ref{remark: simplifiedsuffcondition}.
The statement now follows from Fact~\ref{theorem : suff condition}.
\end{proof}

	\begin{proof}[Proof of Corollary~\ref{first_cor}]
	The statement follows from Theorem~\ref{th_inva_un1} and Fact~\ref{fact: elmentary subclasses}.
\end{proof}

	\begin{proof}[Proof of Theorem~\ref{th_inva_un2}] Argue as in the proof of Theorem~\ref{th_inva_un1} using Theorem~\ref{Theorem:Paolini:2}.
\end{proof}

	\begin{proof}[Proof of Corollary~\ref{second_cor}] The statement follows from Theorem~\ref{th_inva_un2} and Fact~\ref{fact: elmentary subclasses}.
\end{proof}

\section{SPB reductions}

In this section we will denote by \(G\) a Polish group and by \(X\) and \(Y\) two standard Borel spaces.
If \(a\colon G\times X\to X\) is a Borel action of \(G\) on \(X\), we shall denote by \(E_{a}\) the orbit equivalence relation arising from \(a\) (i.e., \(x\mathbin{E_{a}}y \iff \exists g\in G\ (\mathop a(g,x)=y)\)).
The stabilizer of any point \(x\in X\) is the subgroup \(G_{x} \coloneqq \setm{g\in G}{\mathop a(g,x)=x}\).

\begin{definition}\label{def : SPB reduction}
Let  \(a\colon G\times X\to X\), \(b\colon G\times Y\to Y\) be two Borel actions so that \(X\) and \(Y\) are standard Borel \(G\)-spaces.
We say that
\( E_{a}\) \emph{SPB reduces} to \(E_{b}\) (in symbols, \(E_{a}\leq_\mathrm{SPB}E_{b}\)) if there is a Borel map
\(f\colon X\to Y\) witnessing that
 \(E_{a} \leq_\mathrm{B} E_{b}\) and such that
\begin{equation} \tag{\text{SP}}\label{eq:SP}
\forall x\in X ( G_{x}\cong G_{f(x)})\,.
\end{equation}

\end{definition}

We stress the following:
\begin{remark}
When \(G=S_{\infty}\), condition \eqref{eq:SP} is equivalent to saying that \[\forall x\in X(\Aut{x}\cong \Aut{f(x)})\, .\]
\end{remark}

Items~\ref{item:1}--\ref{item:2} of both Theorem~\ref{Theorem:Paolini:1} and Theorem~\ref{Theorem:Paolini:2} can be briefly reformulated as follows.

\begin{theorem}[\cite{Pao_proj}]\label{th_condensed} The following SPB reductions hold:
\begin{itemizenew}
\item \({\cong_\mathrm{Gr}}\leq_\mathrm{SPB} {\cong_\mathrm{pl}}\);
\item \({\cong_\mathrm{Gr}}\leq_\mathrm{SPB} {\cong_\mathrm{ppl}}\).
\end{itemizenew}\end{theorem}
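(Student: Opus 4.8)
The plan is to read off the two asserted SPB reductions directly from Theorems~\ref{Theorem:Paolini:1} and~\ref{Theorem:Paolini:2}, using the Remark that identifies condition~\eqref{eq:SP} with preservation of automorphism groups in the case \(G = S_\infty\). First I would recall that each of \(X_\mathrm{Gr}\), \(X_\mathrm{pl}\), and \(X_\mathrm{ppl}\) is a standard Borel space of countable structures, hence a standard Borel \(S_\infty\)-space under the usual logic action of \(S_\infty\); under this action the associated orbit equivalence relations are exactly \(\cong_\mathrm{Gr}\), \(\cong_\mathrm{pl}\), and \(\cong_\mathrm{ppl}\), and the stabilizer \(G_x\) of a structure \(x\) is precisely its automorphism group \(\Aut{x}\). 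Consequently, for maps between these spaces condition~\eqref{eq:SP} takes the concrete form \(\Aut{x}\cong\Aut{f(x)}\), so verifying an SPB reduction amounts to exhibiting a Borel reduction of the isomorphism relations that additionally preserves automorphism groups.

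For the first bullet I would take \(f\) to be the map \(\Gamma\mapsto P_{\Gamma}\) of Theorem~\ref{Theorem:Paolini:1}. That theorem asserts \(f\) is Borel; its item~(1) gives \(\Gamma_1\cong\Gamma_2 \iff P_{\Gamma_1}\cong P_{\Gamma_2}\), so that \(f\) witnesses \({\cong_\mathrm{Gr}}\leq_\mathrm{B}{\cong_\mathrm{pl}}\); and its item~(2) gives \(\Aut{\Gamma}\cong\Aut{P_{\Gamma}}\), which is exactly~\eqref{eq:SP} for the two logic actions. Hence \(f\) is an SPB reduction and \({\cong_\mathrm{Gr}}\leq_\mathrm{SPB}{\cong_\mathrm{pl}}\). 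For the second bullet I would argue identically with the map \(\Gamma\mapsto P^{*}_{\Gamma}\) of Theorem~\ref{Theorem:Paolini:2}, whose items~(1) and~(2) supply the same two ingredients, yielding \({\cong_\mathrm{Gr}}\leq_\mathrm{SPB}{\cong_\mathrm{ppl}}\).

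I expect no genuine obstacle at this level: the entire substance is already packaged in Theorems~\ref{Theorem:Paolini:1} and~\ref{Theorem:Paolini:2}, which in turn rest on the explicit geometric constructions of \cite{Pao_proj}. The only point requiring a line of care is the bookkeeping identification realizing \(\cong_\mathrm{Gr}\), \(\cong_\mathrm{pl}\), and \(\cong_\mathrm{ppl}\) as orbit equivalence relations of the logic action with stabilizers equal to the automorphism groups, so that~\eqref{eq:SP} is literally the hypothesis \(\Aut{\Gamma}\cong\Aut{P_{\Gamma}}\) (respectively \(\Aut{\Gamma}\cong\Aut{P^{*}_{\Gamma}}\)); this is standard. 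The real difficulty lies upstream — in showing that \(P_{\Gamma}\) and \(P^{*}_{\Gamma}\) are Borel in \(\Gamma\) and simultaneously preserve isomorphism type and automorphism group — but those facts are imported as given.
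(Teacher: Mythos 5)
Your proposal is correct and matches the paper exactly: the paper presents Theorem~\ref{th_condensed} as nothing more than a reformulation of items~\ref{item:1}--\ref{item:2} of Theorems~\ref{Theorem:Paolini:1} and~\ref{Theorem:Paolini:2}, using the remark that for \(G = S_\infty\) condition~\eqref{eq:SP} is precisely preservation of automorphism groups under the logic action. Your extra care about identifying stabilizers with automorphism groups is exactly the identification the paper's remark records, so there is nothing further to add.
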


We highlight the following fact which follows directly from Fact~\ref{theorem : suff condition} and Remark~\ref{remark: simplifiedsuffcondition}, and exhibits how Theorem~\ref{th_condensed} can be used to prove Theorem~\ref{th_inva_un1} and Theorem~\ref{th_inva_un2}.
\begin{fact}\label{Fact:reformulated} Let \(X\) be a standard Borel space of countable structures. Then the relation \(\embeds_{X}\) is an invariantly universal quasi-order
provided that there is a Borel map \(f \colon \mathbb{X}\rightarrow X\) such that:
 \begin{enumerate-(i)}
 \item \label{i : CMMR13}\(f\colon {\embeds_\mathbb{X}}\leq_\mathrm{B}{\embeds_{X}}\);
 \item \label{ii : CMMR13}\(f\colon {\cong_\mathbb{X}}\leq_\mathrm{SPB}{\cong_{X}}\).
  \end{enumerate-(i)}
\end{fact}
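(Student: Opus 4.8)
The plan is to verify that the two hypotheses of Fact~\ref{Fact:reformulated} supply exactly the three requirements of Fact~\ref{theorem : suff condition}, whence the conclusion is immediate. First I would note that hypothesis~\ref{i : CMMR13} is literally the embeddability-reduction demanded in condition~\ref{conditionI : CMMR13} of Fact~\ref{theorem : suff condition}, namely $f\colon {\embeds_\mathbb{X}}\leq_\mathrm{B}{\embeds_X}$; no work is needed here.

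Next I would unpack the SPB reduction in hypothesis~\ref{ii : CMMR13}. By Definition~\ref{def : SPB reduction}, the statement $f\colon {\cong_\mathbb{X}}\leq_\mathrm{SPB}{\cong_X}$ asserts two things at once: that $f$ is a Borel reduction from $\cong_\mathbb{X}$ to $\cong_X$, and that $f$ satisfies the stabilizer-preservation condition~\eqref{eq:SP}. The former supplies the remaining, isomorphism-reduction part of condition~\ref{conditionI : CMMR13} of Fact~\ref{theorem : suff condition}. For the latter, since $X_\mathrm{Gr}$ and $X$ are standard Borel spaces of countable structures the acting group is $G=S_\infty$, and the remark following Definition~\ref{def : SPB reduction} identifies~\eqref{eq:SP} in this case with the assertion $\Aut{T}\cong\Aut{f(T)}$ for every $T\in\mathbb{X}$.

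Finally, armed with automorphism-group preservation, I would invoke Remark~\ref{remark: simplifiedsuffcondition}: because every graph in $\mathbb{X}$ is rigid (property~\ref{property2} of Fact~\ref{proposition : FriMot}) and $f$ preserves automorphism groups, condition~\ref{conditionII : CMMR13} of Fact~\ref{theorem : suff condition} --- the Borelness of the stabilizer map $f(T)\mapsto\Aut{f(T)}$ --- holds automatically. With conditions~\ref{conditionI : CMMR13} and~\ref{conditionII : CMMR13} both verified, Fact~\ref{theorem : suff condition} yields that $\embeds_X$ is invariantly universal. I do not expect a genuine obstacle: the substance of the claim is simply that an SPB reduction bundles together the isomorphism-reduction and the automorphism-preservation needed to fire the simplified sufficient condition of Remark~\ref{remark: simplifiedsuffcondition}. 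The only point requiring care is the passage, via the $G=S_\infty$ remark, from the abstract stabilizer condition~\eqref{eq:SP} to the concrete statement $\Aut{T}\cong\Aut{f(T)}$ about automorphism groups of countable structures.
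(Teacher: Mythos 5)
Your proposal is correct and follows exactly the paper's route: the paper justifies this fact as following directly from Fact~\ref{theorem : suff condition} together with Remark~\ref{remark: simplifiedsuffcondition}, which is precisely your argument of unpacking the SPB reduction into the isomorphism reduction plus automorphism-group preservation, and then using rigidity of the graphs in $\mathbb{X}$ to get condition~\ref{conditionII : CMMR13} for free. Your write-up merely spells out in more detail (via Definition~\ref{def : SPB reduction} and the $G=S_\infty$ remark) what the paper leaves implicit.
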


Some examples of SPB reductions directly follow from the existence of full embeddings between categories.
In category theory there has been quite a lot of work concerning the complexity of different categories by means of (categorical) embeddings\footnote{The categorical notion of embedding should not be confused with the one of embedding between structures.}. Several classical examples of categorical embedding typically concern categories whose objects are algebraic structures of a fixed type, and whose morphisms are the respective homomorphisms (or embedding) between those structures. A comprehensive reference for this kind of results is the book~\cite{PT}. One of the strongest notion of (categorical) embedding that has been considered in the literature is the one of \emph{full embedding}, an injective functor which further induces a bijection between the morphisms in the domain category and the morphisms in the target category.

\begin{definition}
If \(\mathcal{C}\) and \(\mathcal{D}\) are categories, a \emph{full embedding} \(F\) from \(\mathcal{C}\) into \(\mathcal{D}\) is a functor \(F\colon \mathcal{C}\to \mathcal{D}\) such that
\begin{itemizenew}
\item \(F\) is injective on the objects of \(\mathcal{C}\);
\item for every \(a,b\) the map \({\mathrm{Hom}_{\mathcal C}(a,b)}\to {\mathrm{Hom}_{\mathcal D}(F(a),F(b))}\colon f\mapsto F(f)\) is a bijection.
\end{itemizenew}
\end{definition}

An example of full embedding is given by the constructions of the second author, that we previously mentioned in the statements of Theorem~\ref{Theorem:Paolini:1} and Theorem~\ref{Theorem:Paolini:2}. E.g., the map \(\Gamma\mapsto P^{*}_{\Gamma}\) can be redefined for the category of all graphs, regardless of their cardinality (and in fact this is the setting of \cite{Pao_proj}), to prove the following:

\begin{theorem}[essentially \cite{Pao_proj}]
There exists a full embedding from the category of graphs together with graph embeddings into the category of non-Desarguesian projective planes together with planes embeddings (recall Convention~\ref{remark_choice_signature}).
\end{theorem}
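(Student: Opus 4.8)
The plan is to upgrade the object-level and isomorphism-level statements already recorded in Theorem~\ref{Theorem:Paolini:2} to a genuine categorical full embedding, by checking the functoriality and fullness conditions at the level of morphisms rather than merely objects. First I would define the functor $F$ on objects by $F(\Gamma) = P^*_\Gamma$, using the construction of \cite[Section~4]{Pao_proj}, now carried out uniformly for graphs $\Gamma$ of arbitrary cardinality (the construction of $P^*_\Gamma$ does not depend on countability, so this reinterpretation is harmless). Next I would define $F$ on morphisms: given a graph embedding $g\colon \Gamma_1 \to \Gamma_2$, I would specify $F(g)\colon P^*_{\Gamma_1} \to P^*_{\Gamma_2}$ as the induced embedding of the associated geometric lattices of length $3$ (recall Convention~\ref{remark_choice_signature}). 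The point here is that the construction $\Gamma \mapsto P^*_\Gamma$ attaches geometric data to the vertices and edges of $\Gamma$ in a local, coordinate-free way, so a vertex-map respecting adjacency transports canonically to a map of the derived points and lines; I would verify that this assignment respects identities and composition, i.e. $F(\id_\Gamma)=\id_{P^*_\Gamma}$ and $F(h\circ g)=F(h)\circ F(g)$, which is routine once the pointwise definition of $F(g)$ is pinned down.

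The substantive content is the two halves of fullness. Injectivity of $F$ on objects is immediate from item~\ref{item:1} of Theorem~\ref{Theorem:Paolini:2} (it gives $P^*_{\Gamma_1}\cong P^*_{\Gamma_2}\Rightarrow \Gamma_1\cong\Gamma_2$, and distinctness of non-isomorphic graphs is all that is needed). For the hom-set bijection, fix graphs $a=\Gamma_1$ and $b=\Gamma_2$ and consider the map $g\mapsto F(g)$ from $\mathrm{Hom}_{\mathcal{C}}(\Gamma_1,\Gamma_2)$ to $\mathrm{Hom}_{\mathcal{D}}(P^*_{\Gamma_1},P^*_{\Gamma_2})$. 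Injectivity of this map follows because $g$ is recoverable from $F(g)$: the vertices of $\Gamma_1$ correspond to a distinguished, first-order definable family of configurations inside $P^*_{\Gamma_1}$, and $F(g)$ restricted to these configurations reads off $g$. Surjectivity — that every lattice embedding $P^*_{\Gamma_1}\hookrightarrow P^*_{\Gamma_2}$ arises as $F(g)$ for a unique graph embedding $g$ — is where I would invest the real work, and I expect it to be the main obstacle.

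Surjectivity is precisely the rigidity-of-the-construction phenomenon underlying item~\ref{item:3} of Theorem~\ref{Theorem:Paolini:2}, and I would reuse its machinery. The key is that the confinedness of the building blocks, together with Facts~\ref{fact_proj1} and~\ref{fact_proj2} governing free projective extensions, forces any embedding of $P^*_{\Gamma_1}$ into $P^*_{\Gamma_2}$ to respect the internal skeleton: it must carry the distinguished vertex-configurations of $\Gamma_1$ to distinguished vertex-configurations of $\Gamma_2$, and it must preserve the incidence pattern that encodes adjacency (this is exactly the role played by property $(\star_1)$ of \cite{Pao_proj} in the proof of item~\ref{item:3}, where preservation of intersections of lines is used, and this is why Convention~\ref{remark_choice_signature} — taking the lattice signature so that embeddings preserve meets — is indispensable). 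Thus any $\mathcal{D}$-morphism restricts to a vertex-map $g$ on $\Gamma_1$ that is forced to be adjacency-preserving, hence a genuine graph embedding, and by construction its image under $F$ recovers the original morphism. Assembling these points yields that $g\mapsto F(g)$ is a bijection on every hom-set, so $F$ is a full embedding, completing the proof. The one place demanding care is ensuring that the surjectivity argument, transplanted verbatim from the bi-embeddability setting of Theorem~\ref{Theorem:Paolini:2} to arbitrary (not merely countable) graphs, still goes through; but since Facts~\ref{fact_proj1}, \ref{fact_proj2} and property $(\star_1)$ are cardinality-agnostic, no new difficulty arises.
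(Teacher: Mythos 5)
The paper itself contains no written proof of this theorem: it is labelled ``essentially \cite{Pao_proj}'', and the only justification offered is the preceding remark that the map \(\Gamma\mapsto P^{*}_{\Gamma}\) can be re-read as a functor on the category of all graphs. So your plan can only be measured against the intended argument, which is indeed the one you outline (reuse the construction of \cite{Pao_proj}, Facts~\ref{fact_proj1} and~\ref{fact_proj2}, property \((\star_1)\), and Convention~\ref{remark_choice_signature}). In that sense your approach is the right one, but as written it has one genuine gap, located exactly at the point you yourself flag as the crux.

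Your fullness argument establishes only that every lattice embedding \(h\colon P^{*}_{\Gamma_1}\to P^{*}_{\Gamma_2}\) \emph{induces} a graph embedding \(g\colon \Gamma_1\to\Gamma_2\); but that is precisely item~\ref{item:3} of Theorem~\ref{Theorem:Paolini:2}, and it is strictly weaker than fullness. Fullness requires the identification \(h=F(g)\), and your phrase ``by construction its image under \(F\) recovers the original morphism'' asserts this identification rather than proving it. Two further steps are needed. First, \(h\restriction P_{\Gamma_1}\) must coincide with the canonical plane embedding induced by \(g\) on \emph{all} of the base plane \(P_{\Gamma_1}\), including the gadget points and lines attached to each edge: one must check that the image of each gadget element is forced once the images of the vertex-points are fixed. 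This is a rigidity property of the Section~3 construction of \cite{Pao_proj}; it does not follow formally from the bi-implication in item~\ref{item:3}. Second, \(h\) must be determined by its restriction to \(P_{\Gamma_1}\): by Definition~\ref{def_free_ext} every element of \(P^{*}_{\Gamma_1}=F(P_{\Gamma_1})\) is an iterated \(\wedge/\vee\)-combination of elements of \(P_{\Gamma_1}\), so two embeddings in the signature of Convention~\ref{remark_choice_signature} that agree on \(P_{\Gamma_1}\) agree on all of \(P^{*}_{\Gamma_1}\). This induction on the stages of the free projective extension is short, but it is exactly the step that upgrades ``\(h\) induces \(g\)'' to ``\(h=F(g)\)'', and it appears nowhere in your write-up; without it one cannot exclude an embedding that agrees with \(F(g)\) on the vertex-points yet differs from it elsewhere. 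A smaller separate slip: injectivity of \(F\) on objects does not follow from item~\ref{item:1} of Theorem~\ref{Theorem:Paolini:2}, since that item only separates non-isomorphic graphs, whereas injectivity must also separate distinct but isomorphic graphs (say, two isomorphic graphs on the same vertex set); for that one should instead observe that the construction is literally injective, e.g.\ because \(\Gamma\) is recoverable from, indeed coded inside, \(P^{*}_{\Gamma}\).
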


Our interest in full embeddings is easily explained. First, notice that any \(\mathfrak{L}_{\omega_{1} \omega}\)-class \(\mathcal{C}\) can be regarded as a category --- the morphisms of \(\mathcal{C}\) are the usual embeddings between the structures which \(\mathcal{C}\) is formed by. Then, next proposition explains how certain full embedding induce a Borel reduction.

\begin{proposition}\label{full embedding->SPB}
Let \(\mathcal{C}\) and \(\mathcal{D}\) be two \(\mathfrak{L}_{\omega_{1}\omega}\)-classes so that we can consider the corresponding standard Borel spaces \(X_\mathcal{C}\) and \(X_\mathcal{D}\).
Suppose that \(F\) is a full embedding from \(\mathcal{C}\) into \(\mathcal{D}\) such that
\begin{enumerate-(i)}
\item \label{item(i)}  \(F\) preserves countability;
\item \label{item(ii)} \(F\) can be realized as a Borel function from \(X_\mathcal{C}\) to \(X_\mathcal{D}\); i.e.,  there is a Borel function \(f\colon X_{\mathcal{C}}\to X_{\mathcal{D}}\) such that
 for every \(x\in X_{\mathcal{D}}\), \(f(x)\cong F(x)\).
\end{enumerate-(i)}
Then, the isomorphism relation \(\cong_\mathcal{C}\) SPB reduces to \(\cong_\mathcal{D}\).
\end{proposition}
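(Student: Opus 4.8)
The plan is to take $f\colon X_\mathcal{C}\to X_\mathcal{D}$ to be the Borel function furnished by hypothesis~\ref{item(ii)}, so that $f(x)\cong F(x)$ for every $x\in X_\mathcal{C}$; hypothesis~\ref{item(i)} is what guarantees that $F(x)$, and hence $f(x)$, is a legitimate point of $X_\mathcal{D}$, i.e.\ a structure with universe $\omega$. I would then verify separately the two requirements in Definition~\ref{def : SPB reduction}: that $f$ is a Borel reduction of $\cong_\mathcal{C}$ to $\cong_\mathcal{D}$, and that $\Aut{x}\cong\Aut{f(x)}$ for all $x$ (which, since $G=S_\infty$, is precisely condition~\eqref{eq:SP}). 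Both will rest on the single categorical observation that a full embedding is in particular \emph{full and faithful}, and hence both preserves and reflects isomorphisms.

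First I would record the reflection lemma. Since $F$ is a functor it preserves isomorphisms: if $g\colon a\to b$ is invertible with inverse $h$, then $F(h)$ is inverse to $F(g)$. Conversely, if $\phi\colon F(a)\to F(b)$ is an isomorphism in $\mathcal{D}$, fullness provides $g\colon a\to b$ and $h\colon b\to a$ with $F(g)=\phi$ and $F(h)=\phi^{-1}$; then $F(h\circ g)=\phi^{-1}\circ\phi=\mathrm{id}_{F(a)}=F(\mathrm{id}_a)$, and faithfulness forces $h\circ g=\mathrm{id}_a$, and symmetrically $g\circ h=\mathrm{id}_b$, so $g$ witnesses $a\cong b$. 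Recalling that the morphisms of the categories $\mathcal{C},\mathcal{D}$ are embeddings — whose invertible elements are exactly the structure isomorphisms — this yields $a\cong b\iff F(a)\cong F(b)$. Combining this with $f(x)\cong F(x)$ and $f(y)\cong F(y)$ gives $x\cong_\mathcal{C}y\iff F(x)\cong F(y)\iff f(x)\cong_\mathcal{D}f(y)$, so the Borel map $f$ is a reduction.

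For stabilizer preservation I would apply fullness at the level of a single object. The assignment $g\mapsto F(g)$ is a bijection $\mathrm{Hom}_\mathcal{C}(x,x)\to\mathrm{Hom}_\mathcal{D}(F(x),F(x))$ respecting composition and identities, i.e.\ a monoid isomorphism between the monoids of self-embeddings; its restriction to the respective groups of invertible elements is then a group isomorphism $\Aut{x}\cong\Aut{F(x)}$ (here I again invoke preservation and reflection of isomorphisms to see that $F$ carries automorphisms bijectively onto automorphisms). Since $f(x)\cong F(x)$ implies $\Aut{F(x)}\cong\Aut{f(x)}$, I conclude $\Aut{x}\cong\Aut{f(x)}$, which establishes~\eqref{eq:SP}. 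Together with the previous paragraph this gives $\cong_\mathcal{C}\leq_\mathrm{SPB}\cong_\mathcal{D}$.

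The steps above are essentially formal; the point demanding the most care is the translation between the categorical and the descriptive set-theoretic pictures. One must be sure that the categorical isomorphisms in an $\mathfrak{L}_{\omega_1\omega}$-class (with embeddings as morphisms) coincide with the structure isomorphisms underlying $\cong_\mathcal{C}$ and $\cong_\mathcal{D}$, and that hypotheses~\ref{item(i)}--\ref{item(ii)} genuinely permit replacing the abstract object $F(x)$ by the Borel point $f(x)$ throughout. Since the reduction and the stabilizer condition are claimed only for $f$, every statement proved for $F$ must be transported along $f(x)\cong F(x)$; this is harmless precisely because both $\cong$ and the isomorphism type of the automorphism group are isomorphism invariants, but it is the bookkeeping that deserves attention rather than any genuine difficulty.
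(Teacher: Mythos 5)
Your proposal is correct and takes essentially the same approach as the paper: both arguments use fullness (together with faithfulness) to obtain a bijection between \(\mathrm{Iso}(x,y)\) and \(\mathrm{Iso}(F(x),F(y))\), hence a group isomorphism \(\Aut{x}\cong\Aut{F(x)}\), and then transport everything along \(f(x)\cong F(x)\). The only difference is one of detail: you make explicit the reflection-of-isomorphisms lemma and the verification that \(f\) is a Borel reduction, both of which the paper's terse proof leaves implicit.
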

\begin{proof}
Since \(F\) is full, for every \(x,y\), the sets of isomorphisms between \(x\) and \(y\) and their images, respectively denoted by \(\mathrm{Iso}(x,y)\) and \(\mathrm{Iso}(F(x),F(y))\), are isomorphic via the map
\[
\mathrm{Iso}(x,y)\to\mathrm{Iso}(F(x),F(y)) \colon h\mapsto F(h).
\]
In particular, for every \(x\in X_{\mathcal{C}}\), the map:
  \[\mathrm{Aut}(x)\to\mathrm{Aut}(F(x)) \colon h\mapsto F(h).\]
   is a bijection, indeed it is a group isomorphism.
Now let \(f\colon X_{\mathcal{C}}\to X_{\mathcal{D}}\) be a Borel function as in \ref{item(ii)}. Since every \(F(x)\) is isomorphic to \(f(x)\), we have that for every \(x\in X_{\mathcal{C}}\), \[\mathrm{Aut}(x)\cong\mathrm{Aut}(F(x)).\] 
 \end{proof}
 
We add one more comment to Proposition~\ref{full embedding->SPB}. Following the approach of~\cite{Lup}, we can regard the subcategories of \(\mathcal{C}\) and \(\mathcal{D}\) formed by \(X_\mathcal{C}\) and \(X_\mathcal{D}\), respectively, together the isomorphism maps as analytic groupoids. The SPB reduction we get is in particular a functorial reduction (see~\cite[Definition~2.8.1]{Lup}).

The following full embeddings between categories are well-known in the literature. 
When not specified, we consider categories with respect to embeddings homomrphisms as morphisms.

\begin{fact}
There is a full embedding from the category of graphs into any of the following categories.
\begin{itemizenew}
\item the category of partial orders \(\mathrm{PO} \) (\cite[Chapter IV, 5.6]{PT});
\item the category of semigroups \(\mathrm{Smg} \) (\cite[Chapter V, 2.9]{PT});
\item the category of unital rings \(\mathrm{Rng_{1}}\) (\cite[Section 3]{FriSch}).
\end{itemizenew}
\end{fact}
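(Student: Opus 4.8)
The plan is to treat all three cases by a single template: for each target category \(\mathcal{D}\) I would construct an explicit functor \(F\) from the category of graphs (with graph embeddings as morphisms) into \(\mathcal{D}\) and then verify the two defining clauses of a full embedding, namely injectivity on objects and the bijectivity of \(f\mapsto F(f)\) on every hom-set. Injectivity on objects and faithfulness are in each case straightforward, because the functor is arranged to encode the vertex set and the edge set of a graph \(G\) into disjoint, recognizable parts of \(F(G)\), so that \(G\) can be recovered from \(F(G)\) and distinct graph morphisms yield distinct \(\mathcal{D}\)-morphisms. The real content is fullness: one must show that every morphism \(F(G)\to F(H)\) in \(\mathcal{D}\) has the form \(F(\varphi)\) for a (unique) graph morphism \(\varphi\colon G\to H\).

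The standard device for securing fullness is to equip \(F(G)\) with a \emph{rigid skeleton} --- a fixed finite configuration, independent of \(G\), together with invariants intrinsic to \(\mathcal{D}\) (for posets: the rank/height of elements and the covering relation; for semigroups and rings: idempotents, a zero element, and annihilator conditions). Any morphism in \(\mathcal{D}\) must preserve these invariants, and the skeleton is designed so that its only structure-preserving self-maps are the expected ones. This pins down which elements of \(F(G)\) play the role of vertices and which play the role of edges, forcing any \(\mathcal{D}\)-morphism to send vertices to vertices and edges to edges compatibly with incidence; the induced map on vertices is then the sought graph morphism \(\varphi\), and one checks that \(F(\varphi)\) coincides with the given morphism.

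Concretely, for partial orders I would use the three-level gadget of Hedrl\'{\i}n--Pultr as in \cite[Chapter IV, 5.6]{PT}: vertices form one antichain, edges another, an edge lies above exactly its two endpoints, and a rigid tag fixes the two levels, so that the height function does the separating work. For semigroups I would follow the presentation in \cite[Chapter V, 2.9]{PT}, encoding incidence into generators subject to relations, with a zero and distinguished idempotents as the rigidifying skeleton. For unital rings I would invoke the construction of \cite[Section 3]{FriSch}, in which the graph is realized through a ring whose idempotent and annihilator structure recovers the incidence data. In all three cases the main obstacle is precisely the fullness verification --- ruling out \emph{accidental} morphisms that respect the coarse invariants but scramble the encoded graph --- and this is exactly where the careful design of the rigid skeleton is essential; injectivity, faithfulness, and functoriality are comparatively routine once the encoding is fixed.
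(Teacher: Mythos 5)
The paper offers no proof of this Fact beyond the three citations, and your proposal ultimately rests on exactly the same sources (\cite[Chapter IV, 5.6]{PT}, \cite[Chapter V, 2.9]{PT}, \cite[Section 3]{FriSch}), so it is essentially the same approach: deferring to the known constructions in the literature. Your added outline of the rigid-skeleton technique --- encoding vertices and edges into recognizable parts of the target object and using preserved invariants to rule out accidental morphisms --- is an accurate description of how those cited constructions achieve fullness, so the proposal is fine as it stands.
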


 One can check that for any of the aforementioned categorical embeddings, items \ref{item(i)}--\ref{item(ii)} of Proposition~\ref{full embedding->SPB} are satisfied, thus we obtain the following.

\begin{proposition}
The isomorphism relation between countable graphs \(\cong_\mathrm{Gr}\) SPB reduces to any of the following isomorphism relation
\begin{itemizenew}
\item the isomorphism relation between countable partial orders \(\cong_{\mathrm{PO}} \);
\item the isomorphism relation between countable semigroups \(\cong_{\mathrm{Smg}} \) ;
\item the isomorphism relation between countable unital rings \(\cong_{\mathrm{Rng_{1}}}\).
\end{itemizenew}
\end{proposition}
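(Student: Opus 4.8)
The plan is to obtain each of the three stated SPB reductions as a direct instance of Proposition~\ref{full embedding->SPB}, fed by the full embeddings furnished in the preceding Fact. Thus the entire argument reduces to checking, for each target category \(\mathcal{D}\) among partial orders, semigroups, and unital rings, that the full embedding \(F\colon \mathrm{Gr}\to\mathcal{D}\) supplied there satisfies the two hypotheses \ref{item(i)} and \ref{item(ii)} of that proposition. Once this is verified, three applications of Proposition~\ref{full embedding->SPB} yield \(\cong_\mathrm{Gr}\leq_\mathrm{SPB}\cong_{\mathrm{PO}}\), \(\cong_\mathrm{Gr}\leq_\mathrm{SPB}\cong_{\mathrm{Smg}}\), and \(\cong_\mathrm{Gr}\leq_\mathrm{SPB}\cong_{\mathrm{Rng_{1}}}\), which is exactly the statement.

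First I would fix one such \(F\) and verify hypothesis~\ref{item(i)}: that \(F\) sends a structure carried by \(\omega\) to a structure carried by \(\omega\). In each cited construction (\cite[Chapter IV, 5.6]{PT}, \cite[Chapter V, 2.9]{PT}, and \cite[Section 3]{FriSch}) the underlying set of \(F(\Gamma)\) is assembled from the vertices and edges of \(\Gamma\) by a fixed finitary recipe (adjoining boundedly many auxiliary elements per vertex or edge, or passing to a term-type algebra over \(\Gamma\)), so \(F(\Gamma)\) is countably infinite whenever \(\Gamma\) is. Composing \(F\) with a uniform coding of the recipe's building blocks as natural numbers then forces the output domain to be exactly \(\omega\); since the coding is independent of \(\Gamma\), the resulting functor is naturally isomorphic to \(F\) and hence is still a full embedding, so this adjustment is harmless and secures~\ref{item(i)}.

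Next I would verify hypothesis~\ref{item(ii)}, namely that \(F\restriction X_\mathrm{Gr}\) is realized by a Borel map \(f\colon X_\mathrm{Gr}\to X_\mathcal{D}\) with \(f(\Gamma)\cong F(\Gamma)\). This follows from the explicit and uniform character of the three constructions: the relations or operations of \(F(\Gamma)\) are determined from the atomic diagram of \(\Gamma\) by quantifier-free (or otherwise manifestly Borel) conditions, so the assignment sending a code for \(\Gamma\) to a code for \(F(\Gamma)\) defines a Borel subset of \(X_\mathcal{D}\times X_\mathrm{Gr}\) and hence a Borel function. Concretely, one reads off the membership conditions of the target structure from the corresponding reference and checks them to be Borel in the input.

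The only genuine labor lies in this last verification, and the main obstacle is not conceptual but bookkeeping: confirming Borelness in the unital ring case \cite[Section 3]{FriSch}, whose construction is the most intricate of the three. For the partial-order and semigroup cases the constructions are transparent enough that Borelness of \(f\) is immediate. Having checked \ref{item(i)} and \ref{item(ii)} in all three cases, I would conclude by invoking Proposition~\ref{full embedding->SPB} once for each target category.
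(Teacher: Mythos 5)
Your proposal matches the paper's own proof, which is exactly the one-line observation that each of the three cited full embeddings satisfies hypotheses \ref{item(i)} and \ref{item(ii)} of Proposition~\ref{full embedding->SPB}, so the proposition applies three times. Your additional care about relabeling domains onto \(\omega\) (up to natural isomorphism) and about the uniform Borel character of the constructions is just a fleshed-out version of the verification the paper leaves to the reader.
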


We conclude this section with a few more thoughts about SPB reductions.
If the \(\mathfrak{L}_{\omega_{1}\omega}\)-elementary classes \(X\) and \(Y\) are Borel complete, then the isomorphism relations
\({\cong_{X}}\) and \({\cong_{Y}}\) are necessarily Borel bi-reducible, but they need not be SPB bi-reducible. E.g., the isomorphism relation between countable graphs \(\cong_\mathrm{Gr}\) does not SPB reduce to isomorphism between countable groups \(\cong_\mathrm{Gp}\), because every infinite countable group have nontrivial automorphisms.

Let \(\cong_\mathrm{Tr}\) be the isomorphism relation between countable trees (i.e., connected acyclic graphs) and \(\cong_\mathrm{LO}\) be the isomorphism relation between countable linear orders. Although \(\cong_\mathrm{Tr}\) and \(\cong_\mathrm{LO}\) have been known to be Borel complete, they are not equivalent to \(\cong_\mathrm{Gr}\) up to faithful Borel reducibility (cf.~\cite[Theorem~4.5]{Gao01}). It is then natural to ask the following questions.

\begin{question}
Does \({\cong_\mathrm{Gr}}\leq_\mathrm{SPB}{\cong_\mathrm{Tr}}\)?
\end{question}

\begin{question}
Does \({\cong_\mathrm{Gr}}\leq_\mathrm{SPB}{\cong_\mathrm{LO}}\)?
\end{question}

\end{document}